\numberwithin{equation}{section}
\theoremstyle{plain}
\newtheorem{theorem}[equation]{Theorem}
\newtheorem{lemma}[equation]{Lemma}
\newtheorem{proposition}[equation]{Proposition}
\theoremstyle{remark}
\newtheorem{remark}[equation]{Remark}
\theoremstyle{definition}
\newtheorem{definition}[equation]{Definition}
\newcommand{\bP}{\mathbb{P}}
\newcommand{\bA}{\mathbb{A}}
\newcommand{\bZ}{\mathbb{Z}}
\newcommand{\calA}{\mathcal{A}}
\newcommand{\calF}{\mathcal{F}}
\newcommand{\calO}{\mathcal{O}}
\newcommand{\calI}{\mathcal{I}}
\newcommand{\calD}{\mathcal{D}}
\newcommand{\Ext}{\mathrm{Ext}}
\newcommand{\Aut}{\mathrm{Aut}}
\newcommand{\Hom}{\mathrm{Hom}}
\newcommand{\git}{/\kern-0.2em/}
\newcommand{\Ku}{\mathrm{Ku}}
\newcommand{\tX}{\widetilde{X}}
\newcommand{\Fix}{\mathrm{Fix}}
\DeclareMathOperator{\Bl}{Bl}
\DeclareMathOperator{\HH}{HH}
\title[Equivariant Kuznetsov Components]{Equivariant Kuznetsov components for Cubic fourfolds with a symplectic involution}
\author{Laure Flapan}
\address{Department of Mathematics\\
Michigan State University\\
619 Red Cedar Road, East Lansing, MI 48824}
\email{flapanla@msu.edu}
\author{Sarah Frei}
\address{Department of Mathematics, Dartmouth College, Kemeny Hall, Hanover, NH 03755}
\email{sarah.frei@dartmouth.edu}
\author{Lisa Marquand}
\address{Courant Institute,
  251 Mercer Street,
  New York, NY 10012, USA}
\email{lisa.marquand@nyu.edu}
\thanks{ 
L.F. was supported by NSF grant DMS-2200800.
S.F. was supported by NSF grant DMS-2401601.  
}
\keywords{cubic fourfold, Kuznetsov component, equivariant derived category, involution}
\subjclass[2020]{Primary: 14J50, 14J35, 14F08. Secondary: 14J28}
\begin{document}
\begin{abstract}
    We study the equivariant Kuznetsov component $\Ku_G(X)$ of a general cubic fourfold $X$ with a symplectic involution. We show that $\Ku_G(X)$ is equivalent to the derived category $D^b(S)$ of a $K3$ surface $S$, where $S$ is given as a component of the fixed locus of the induced symplectic action on the Fano variety of lines on $X$.
\end{abstract}
\maketitle
\section{Introduction}
The rationality problem of a cubic fourfold $X\subset \mathbb{P}^5$  is one of the most intensely studied problems in algebraic geometry.
A recent approach to this problem, pioneered by Kuznetsov \cite{Kuz10}, is to study rationality via the derived category $D^b(X)$.
More precisely, consider the Kuznetsov component $\Ku(X)$ given by the left orthogonal complement to the collection $\langle \calO_X, \calO_X(1), \calO_X(2)\rangle$ in $D^b(X)$. 
This component $\Ku(X)$ is a $K3$ category, 
meaning it has the same Hochschild homology as the derived category $D^b(S)$ of a $K3$ surface $S$ and 
its Serre functor is given by a shift by 2.
 Kuznetsov conjectured that the cubic fourfold $X$ is rational if and only if there is an equivalence $\Ku(X) \simeq D^b(S)$ for some $K3$ surface $S$.  This has been verified in all cases where  $X$ is known to be rational.

Cubic fourfolds with non-trivial automorphisms are a natural testing  ground for rationality conjectures.
An automorphism of a cubic fourfold $X$ is symplectic if it acts trivially on $H^{3,1}(X)$. If the group of symplectic automorphisms of $X$ has order greater than $2$, then there is a $K3$ surface $S$ such that $\Ku(X)\simeq D^b(S)$ \cite{Ouchi}. Moreover, if $X$ admits a symplectic automorphism of prime order $p\ge 3$, then $X$ is indeed rational \cite[Cor 1.3]{BGM25}. 
The focus of this paper is to study the Kuznetsov component in the remaining case --- the case of a cubic fourfold with a symplectic involution.

Such a cubic fourfold $X$ is potentially irrational in that $\Ku(X)\not\simeq D^b(S)$ for any $K3$ surface $S$ \cite[Theorem 1.2]{Marq23}. 
However, the equivariant Kuznetsov component $\Ku_G(X)$ of $G$-linearised objects is a 2-Calabi--Yau category \cite[Lemma 6.5]{beckmann2020}, so it is natural to ask whether $\Ku_G(X)$ is equivalent to the derived category of a $K3$ surface. 
Our main result is the following:

\begin{theorem}\label{main theorem}
    Let $X$ be a general cubic fourfold with a symplectic involution $\phi\in \Aut(X)$ and let $G:=\langle \phi\rangle \cong \bZ/2\bZ$. Then there is an equivalence of categories
    $$\Ku_G(X)\simeq D^b(S),$$
    where $S\subset F(X)$ is the $K3$ component of the fixed locus of the induced action of $G$ on the Fano variety of lines of $X$.
\end{theorem}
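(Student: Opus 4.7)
The plan is to realise the $K3$ surface $S$ as a fine moduli space of $G$-equivariant Bridgeland-stable objects in $\Ku_G(X)$, and then to use the associated universal family $\calE$ on $S\times X$ as the kernel of a Fourier--Mukai functor $\Phi_\calE \colon D^b(S)\to \Ku_G(X)$, which I expect to be the claimed equivalence.

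To set up the moduli interpretation, I would start from the well-known realisation of the Fano variety of lines $F(X)$ as a moduli space $M_\sigma(\Ku(X),\lambda_1)$ of $\sigma$-stable objects of fixed Mukai vector $\lambda_1$ in $\Ku(X)$ for an appropriate Bridgeland stability condition $\sigma$, under which a line $\ell\subset X$ corresponds to the projection into $\Ku(X)$ of its twisted ideal sheaf $I_\ell(H)$. Since $\phi$ preserves the exceptional collection $\langle \calO_X,\calO_X(1),\calO_X(2)\rangle$, it induces an autoequivalence of $\Ku(X)$ compatible with the induced involution on $F(X)$. Choosing $\sigma$ in the $\phi$-invariant locus of the stability manifold and then equivariantising, I would obtain a Bridgeland stability condition $\sigma_G$ on $\Ku_G(X)$ with the property that a $\phi$-invariant $\sigma$-stable object of $\Ku(X)$, equipped with a $G$-linearisation, is $\sigma_G$-stable in $\Ku_G(X)$.

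The next step is to analyse the fixed locus. By the classical description of fixed loci of symplectic involutions on hyperkähler fourfolds of $K3^{[2]}$-type, $F(X)^\phi$ decomposes as $S\sqcup \{\text{28 points}\}$ with $S$ a smooth $K3$ surface. A point of $S$ corresponds to a $\phi$-invariant line, hence to a $\phi$-invariant object of $\Ku(X)$, and such an object admits a $G$-linearisation, unique up to the sign character of $G$. I would pin down the correct linearisation so that $S$ (rather than the opposite-character locus, nor one of the isolated fixed points) is identified with a connected component of the equivariant moduli space $M_{\sigma_G}(\Ku_G(X),v)$ for an appropriate equivariant Mukai vector $v$, and verify that this moduli problem is fine so that a universal family $\calE$ exists.

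With $\calE$ in hand, full faithfulness of $\Phi_\calE$ reduces to a Bridgeland-style computation of $\Hom^*$ between $\sigma_G$-stable objects of Mukai vector $v$, exploiting the $2$-Calabi--Yau structure of $\Ku_G(X)$ from \cite{beckmann2020}, while essential surjectivity follows because the image is an admissible subcategory of $\Ku_G(X)$ whose orthogonal must vanish by a comparison of Hochschild--Mukai lattices on each side. The main obstacle I anticipate is in the previous step: selecting the $G$-linearisation correctly, establishing fineness of the equivariant moduli space, and matching the geometric stability that makes $S$ a moduli space with the abstract $\sigma_G$-stability --- in short, a careful bookkeeping of characters and Mukai vectors in the equivariant setting.
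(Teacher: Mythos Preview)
Your overall strategy---construct a Fourier--Mukai functor $D^b(S)\to \Ku_G(X)$ using the objects in $\Ku(X)$ attached to $\phi$-invariant lines, verify full faithfulness by an $\Ext$ computation exploiting the $2$-Calabi--Yau property, then argue that the embedding must be an equivalence---is exactly the shape of the paper's proof. But the paper executes the first two steps more directly, and your third step contains a genuine gap.

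For the functor, the paper works with the explicit kernel $\calF$ on $S\times X$ (the universal version of $F_{\ell_s}=\ker(\calO_X^{\oplus 4}\to I_{\ell_s}(1))$) and checks by hand that it is $G$-invariant, so that the Fourier--Mukai transform is a $G$-functor and factors through $\Ku_G(X)$. No Bridgeland stability, no equivariant moduli space, no character bookkeeping: the linearisation issue you flag as an obstacle simply does not arise. Full faithfulness is then the Bondal--Orlov criterion, with $\dim(\Ext^1_{\Ku(X)}(F_{\ell_s},F_{\ell_s}))^G=2$ read off from $T_sS=(T_{\ell_s}F(X))^G$, and the vanishing for $s\neq t$ from deformation invariance of the equivariant Euler characteristic. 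Your proposed route via $\sigma_G$-stability would reach the same computations, but at the cost of establishing existence of invariant stability conditions, their descent to $\Ku_G(X)$, and fineness of the equivariant moduli problem---none of which is needed.

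The real issue is essential surjectivity. You propose that the orthogonal vanishes ``by a comparison of Hochschild--Mukai lattices on each side''. But the Hochschild homology (equivalently, the Mukai lattice) of $\Ku_G(X)$ is not known a priori; determining it is the main technical content of the paper, occupying all of \S\ref{sec:indecomp}. The argument there is substantial: one blows up $\Fix_G(X)$, identifies the quotient $\widetilde{X}/G$ explicitly as $\Bl_S(\bP^3\times\bP^1)$, realises $[\widetilde{X}/G]$ as a square-root stack to obtain a second semi-orthogonal decomposition of $D^b_G(\widetilde{X})$, and then matches Hochschild homology of the two decompositions to conclude $\HH_\bullet(\Ku_G(X))\cong\HH_\bullet(D^b(S))$. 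Only then does indecomposability of $\Ku_G(X)$ (via Bridgeland's trick) force the fully faithful functor to be an equivalence. Your proposal does not supply any mechanism for this computation, and without it the argument is incomplete.
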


The category $\Ku_G(X)$ can be viewed as the Kuznetsov component of the smooth quotient $\tX/G,$ where $\tX$ is the blow up of $X$ in the fixed locus of $G$. Although \Cref{main theorem} does not address the rationality problem for the cubic fourfold $X$ itself, it sheds light on the question of when the quotient of a Fano variety is rational. Indeed, a key ingredient in the proof of \Cref{main theorem} is proving the rationality of the quotient $X/G$: 

\begin{proposition}[\Cref{prop: Z}]\label{prop:intro}
    The quotient $\tX/G$ is isomorphic to $\Bl_S(\bP^1\times \bP^3),$ where the $K3$ surface $S$ is embedded as a complete intersection of a $(3,0)$ divisor and a $(1,2)$ divisor. In particular, $X/G$ is rational.
\end{proposition}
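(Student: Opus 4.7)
The plan is to construct an explicit $G$-equivariant morphism $\tX\to\bP^3\times\bP^1$, show that it descends to a birational morphism from $\tX/G$, and identify the contraction as the blow-down of the stated K3 surface.

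First I would diagonalise $\phi$ by choosing coordinates in which it acts as $+1$ on $V_+=\langle x_0,\ldots,x_3\rangle$ and as $-1$ on $V_-=\langle x_4,x_5\rangle$. Every $\phi$-invariant cubic is then of the form
$$F = f_3(x_0,\ldots,x_3) + x_4^2 L_1 + x_4 x_5 L_2 + x_5^2 L_3,$$
with $f_3$ a cubic and $L_1,L_2,L_3$ linear in $(x_0,\ldots,x_3)$. One reads off that $\Fix(\phi)\cap X$ is the disjoint union of the line $\ell=\bP(V_-)\subset X$ and the cubic surface $T=\{f_3=0\}\subset\bP(V_+)$. Consider next the $G$-invariant rational map
$$\pi\colon\bP^5\dashrightarrow\bP(V_+)\times\bP(V_-)=\bP^3\times\bP^1,\quad (x_0:\cdots:x_5)\mapsto \bigl((x_0:x_1:x_2:x_3),\,(x_4:x_5)\bigr),$$
which is well-defined off $\bP(V_+)\sqcup\bP(V_-)$ and $G$-invariant because $(-x_4:-x_5)=(x_4:x_5)$. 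Blowing up $\bP^5$ along $\bP(V_+)\sqcup\bP(V_-)$ realises a $\bP^1$-bundle $\widetilde{\bP^5}\to\bP^3\times\bP^1$ whose fiber over $(p,q)$ is the line $L_{p,q}$ joining $p$ to $q$, and the proper transform of $X$ is precisely $\tX=\Bl_{T\sqcup\ell}X$.

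On a generic line $L_{p,q}$ parameterised by $[\lambda:\mu]$ the restriction of $F$ factors as
$$F|_{L_{p,q}} = \lambda\bigl(\lambda^2 f_3(p) + \mu^2 Q(p,q)\bigr),\qquad Q(p,q):=q_0^2 L_1(p)+q_0 q_1 L_2(p)+q_1^2 L_3(p).$$
The root $\lambda=0$ is the fixed point $q\in\ell$, absorbed by the blow-up, so the induced morphism $\rho\colon\tX\to\bP^3\times\bP^1$ is a degree-$2$ cover; the $G$-action on $\tX$ swaps the two residual roots $\lambda/\mu=\pm\sqrt{-Q/f_3}$ and hence is the deck involution of $\rho$. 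Consequently $\bar\rho\colon\tX/G\to\bP^3\times\bP^1$ is birational. A direct local computation shows that the exceptional divisor $E_T\subset\tX$ maps birationally onto the $(3,0)$-divisor $T\times\bP^1$, while $E_\ell$ maps onto the $(1,2)$-divisor $\{Q=0\}$; these two divisors form the full branch locus of $\rho$, and their intersection
$$S:=\{f_3=0\}\cap\{Q=0\}\subset\bP^3\times\bP^1$$
is a complete intersection of bidegrees $(3,0)$ and $(1,2)$ with $K_S=(-4+3+1,\,-2+0+2)|_S=\calO_S$ by adjunction. For general $(f_3,L_1,L_2,L_3)$ it is smooth and simply connected by Lefschetz applied to $\bP^3\times\bP^1$, hence a K3 surface.

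To identify $\tX/G$ with $\Bl_S(\bP^3\times\bP^1)$ I would compare fibers of $\bar\rho$. Over $(p,q)\in S$ the entire line $L_{p,q}$ lies in $X$; its proper transform in $\tX$ is a smooth rational curve meeting $E_T$ and $E_\ell$ transversally at the two $G$-fixed points, and its $G$-quotient is again a $\bP^1$. Thus $\bar\rho$ has fiber $\bP^1$ over each point of $S$ and a reduced point elsewhere — precisely the fiber behaviour of a blow-up along a smooth codimension-$2$ center. Invertibility of the ideal sheaf $\bar\rho^{-1}\calI_S\cdot\calO_{\tX/G}$ (verified locally) then produces via the universal property of the blow-up a morphism $\tX/G\to\Bl_S(\bP^3\times\bP^1)$ over $\bP^3\times\bP^1$ that is bijective and birational between normal projective varieties, hence an isomorphism. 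Rationality of $X/G$ follows since $X/G$ is birational to $\tX/G\cong\Bl_S(\bP^3\times\bP^1)$. The main obstacle will be to rigorously verify the smoothness claims used throughout: smoothness of $S$, smoothness of $\tX/G$ (which reduces to the observation that $\Fix_{\tX}(G)=E_T\sqcup E_\ell$ is a union of smooth divisors, so the quotient is smooth along a ramification divisor and \'etale elsewhere), and the fact that the branch divisor $T\times\bP^1\cup\{Q=0\}$ has simple normal crossings along $S$. These reduce to genericity conditions on the tuple $(f_3,L_1,L_2,L_3)$ and can be checked by explicit coordinate computations.
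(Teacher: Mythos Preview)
Your approach is correct and shares the same geometric setup as the paper: both embed $\tX$ in $\Bl_{L\sqcup\Pi}\bP^5$ (the paper writes this out in coordinates $[x_i]\times[y_0:\ldots:y_3]\times[y_4:y_5]$) and use the resulting morphism to $\bP^3\times\bP^1$. The verification of the isomorphism, however, proceeds along genuinely different lines. The paper writes down the local equation of $\tX/G$ in an affine chart (substituting $a_4=x_4^2$ to pass to the quotient) and observes directly that it coincides with the affine equation of $\Bl_S(\bP^3\times\bP^1)$ realised as a hypersurface in $\bP^3\times\bP^1\times\bP^1$, then asserts that these local isomorphisms glue. Your route is more structural: you recognise $\rho\colon\tX\to\bP^3\times\bP^1$ as a double cover with $G$ as deck group, identify the branch divisor as $(T\times\bP^1)\cup\{Q=0\}$ via the images of $E_T$ and $E_\ell$, and then invoke the universal property of the blow-up to obtain the map to $\Bl_S(\bP^3\times\bP^1)$. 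Your argument explains \emph{why} the quotient is a blow-up (the $\bP^1$-fibres over $S$ are exactly the invariant lines $L_{p,q}\subset X$), whereas the paper's affine comparison is quicker and sidesteps the smoothness and invertibility checks you flag at the end. Those checks are genuine obligations in your approach---particularly that $\bar\rho^{-1}\calI_S\cdot\calO_{\tX/G}$ is invertible and that the induced map to the blow-up is an isomorphism on each exceptional $\bP^1$---but they are routine once one has the explicit local equations, which are essentially the same ones the paper writes down.
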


There are few known examples of 2-Calabi--Yau categories beyond $K3$ and abelian surface categories. Thus, it is natural to ask whether $\Ku_G(X)$ gives new examples of 2-Calabi--Yau categories.  Although there are few instances in which $\Ku_G(X)$ has been computed, our result, together with results in \cite{hu2023} and \cite[\S 7.4]{Boa} for examples of cubic fourfolds with a $\mathbb{Z}/3\bZ$-action, gives evidence towards a negative answer to this question. 

In \cite{Boa}, Beckmann and Oberdieck study the equivariant derived category of a $K3$ surface $S$ with 
a finite group $G$ of symplectic automorphisms. 
They consider the action of $G$ on a moduli space $M$ of semi-stable objects (with respect to an invariant stability condition) on $S$ and show that, under certain additional hypotheses, there is a two-dimensional $G^\vee$-torsor $S'$ over the fixed locus ${\rm Fix}_G(M)$ such that $D^b_G(S)\simeq D^b(S')$ \cite[Theorem 1.1]{Boa}.
By \cite[Theorem 1.1]{LPZ}, the Fano variety of lines of a cubic fourfold $X$ is a moduli space of stable objects on the $K3$ category $\Ku(X)$. Thus, given that our case -- the case when $|G|=2$ -- is the only case when $X$ does not have an associated $K3$ surface $S$ such that ${\rm Ku}(X)\simeq D^b(S)$,  one can view \Cref{main theorem} as an extension of Beckmann--Oberdieck's result to the non-commutative setting. 

\begin{remark}
The $K3$ surface $S$ is equipped with an anti-symplectic involution. The induced action on $D^b(S)\cong \Ku_G(X)$ can be shown to commute with the residual action of the dual $G^\vee$ and hence lifts to an autoequivalence of $\Ku(X)$ \cite[Theorem 1.3]{Ela15} (see also \cite[Proposition 3.5]{beckmann2020}). This autoequivalence may be of independent interest, since it is of geometric origin but not induced by an automorphism of the cubic $X$.
\end{remark}

\subsection*{Method of proof}
We prove \Cref{main theorem} directly by constructing an equivalence of categories from $D^b(S)$ to $\Ku_G(X)$. 
We do this by exploiting the geometric situation: the $K3$ surface $S$ naturally parametrises lines that are invariant under the involution $\phi$. 
This allows us to produce a Fourier-Mukai kernel in $D^b(S\times X)$, defining a functor $\Phi$ from $D^b(S)$ to $\Ku(X)$. 
Equipping $S\times X$ with the diagonal $G$-action acting trivially on the first factor, and considering $D^b(S)$ as a $G$-category with trivial $G$-action, we prove that $\Phi$ is a $G$-functor, and hence factors through $\Ku_G(X).$ 
We use the criteria of Bondal and Orlov \cite[Proposition~7.1]{HuyFMBook} to prove the resulting functor $\Phi_G\colon D^b(S)\rightarrow \Ku_G(X)$ is fully faithful. In order to establish that $\Phi_G$ is an equivalence, it is then enough to show that $\Ku_G(X)$ is indecomposable. Since $K3$ categories are indecomposable, we in fact show that $\Ku_G(X)$ is a $K3$ category. 

Since $\Ku_G(X)$ appears as an admissible subcategory of the equivariant category $D^b_G(X)$ \cite[Theorem 6.3]{Ela11}, we prove that $\Ku_G(X)$ is a $K3$ category by studying $D^b_G(X)$. One classical approach to understanding $D^b_G(X)$ is via the Mckay correspondence \cite{BKR01}, which describes $D^b_G(X)$ by constructing a crepant resolution of the quotient $X/G$. This strategy was used recently in \cite{hu2023} for cubic fourfolds with a particular symplectic $\mathbb{Z}/3\bZ$-action and in \cite{casalainamartin2024} for cubic threefolds with a particular involution. However, this approach requires that the fixed locus of $G$ be equidimensional, which is not the case in our setting--- in our case the fixed locus of $G$ is the union of a surface and a line. 

Thus instead, we extend the action of $G$ to the blow up $\widetilde{X}:=\Bl_{\Fix_G(X)}X$. We then compute two semi-orthogonal decompositions of $D^b_G(\tX)$. The first comes from  using Orlov's blow up formula to compute $D^b(\tX)$ and taking its $G$-equivariant category. This semi-orthogonal decomposition realizes $\Ku_G(X)$ as an admissible subcategory of $D^b_G(\tX)$. The second semi-orthogonal decompositions of $D^b_G(\tX)$ comes from viewing the quotient stack $[\tX/G]$ as a square root stack and using the description of a semi-orthogonal decomposition of root stacks from \cite[Theorem 1.2]{BD24}. This allows us to conclude that $D^b(\tX/G)$ is an admissible subcategory of $D^b_G(\tX)$, which in turn by \Cref{prop:intro} realizes $D^b(S)$ as an admissible subcategory of $D^b_G(\tX)$. The fact that $\Ku_G(X)$ is a $K3$ category then follows by comparing the Hochschild homology of both decompositions.

\subsection*{Outline} In \Cref{sec: prelims}, we recall necessary preliminaries on derived categories and equivariant categories. In \Cref{sec:indecomp}, we prove that the equivariant Kuznetsov component for a cubic fourfold with a symplectic involution is a $K3$ category. Finally, in \Cref{sec:equivalence} we prove \Cref{main theorem} by explicitly constructing the necessary equivalence.

\subsection*{Acknowledgments}
This paper benefited from helpful correspondence and discussions with the
following people who we gratefully acknowledge: Nick Addington, Vanya Cheltsov, Enrico Fatighenti, Alex Perry, Saket Shah, Yu Shen, and Xiaolei Zhao.

\section{Preliminaries}\label{sec: prelims}
In this section we recall the necessary results on derived categories. We keep the exposition to the minimum needed for our purposes, providing references for the more general statements. In \Cref{subsec:CY} we introduce Calabi--Yau and $K3$ categories, and recall results on Hochschild homology. In \Cref{subsec: Gequiv} we discuss $G$-equivariant categories, specialising to when the group $G$ is induced by an automorphism of a variety $X$ acting on either $D^b(X)$ or on an admissible subcategory. In \Cref{subsec: root stack}, we recall the definition of a root stack and state a result on a semi-orthogonal decomposition for the derived category of a root stack.

\subsection{Calabi--Yau categories and Hochschild homology}\label{subsec:CY}

Let $\calD$ be a triangulated category over an algebraically closed field $k$ of characteristic 0. We say that $\calD$ is an \textbf{$n$-Calabi--Yau} category if both of the following hold: 
\begin{itemize}
    \item $\calD$ is an admissible subcategory of $D^b(X)$ for some quasi-projective variety $X$;
    \item the Serre functor is given by shift by $n$, i.e. $S_{\calD} = [n]$.
\end{itemize}

Let $\calD$ be either $D^b(X)$ for some smooth projective variety or an admissible subcategory of $D^b(X)$. We recall some results on Hochschild (co)homology of such a category -- the main reference is \cite{kuz09}, although the results in \cite{Macrisurvey} are sufficient for our purposes.

\begin{proposition}\cite[Prop 2.25]{Macrisurvey}\label{prop: HH additive}
    Let $\calD=\langle \calA_1, \dots \calA_r\rangle$ be a semi-orthogonal decomposition. Then for all $i\in \bZ$,
    $$\HH_i(\calD)\cong \bigoplus_{j=1}^r \HH_i(\calA_j).$$
\end{proposition}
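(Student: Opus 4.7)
The plan is to follow Kuznetsov's approach from \cite{kuz09}, where this additivity is established in full generality. The strategy is to realise both sides of the asserted isomorphism in terms of Fourier--Mukai kernels living in $D^b(X\times X)$, and then exhibit a filtration of the diagonal kernel whose subquotients recover the components $\calA_j$.

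Concretely, I would first recall that an admissible subcategory $\calA_j\subset D^b(X)$ admits a projection functor $\pi_j\colon D^b(X)\to\calA_j$ realised by a Fourier--Mukai kernel $P_j\in D^b(X\times X)$, and that $\HH_i(\calA_j)$ can be expressed as a graded $\Hom$ space built from $P_j$ together with the Serre kernel of $X$. In the same language, $\HH_i(\calD)$ is computed from the diagonal $\calO_\Delta$.

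Next, I would use the fact that the semi-orthogonal decomposition $\calD=\langle\calA_1,\dots,\calA_r\rangle$ induces a filtration of the diagonal kernel in $D^b(X\times X)$: there exist objects fitting into a tower
$$0=Q_r\to Q_{r-1}\to\cdots\to Q_1\to Q_0=\calO_\Delta$$
with distinguished triangles $Q_j\to Q_{j-1}\to P_j\to Q_j[1]$. Applying the Hochschild trace to this filtration yields a spectral sequence converging to $\HH_*(\calD)$ whose $E_1$-page contains the diagonal pieces $\HH_*(\calA_j)$ together with cross-terms built from pairs $(P_i,P_j)$ with $i\neq j$.

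The main obstacle, and the heart of Kuznetsov's argument, is the vanishing of these cross-terms. This vanishing is a direct consequence of semi-orthogonality: the composition of projection kernels vanishes in the appropriate direction, which in turn forces the vanishing of the relevant Ext groups between kernels in $D^b(X\times X)$. Once this is established, the spectral sequence degenerates at $E_1$ and produces the desired direct sum decomposition $\HH_i(\calD)\cong\bigoplus_j\HH_i(\calA_j)$. I expect the technical heart of the work to be the careful identification of Hochschild homology via kernels (so that the filtration makes sense termwise) and the passage from semi-orthogonality of categories to the needed Ext-vanishing for kernels; both are done cleanly in \cite{kuz09}.
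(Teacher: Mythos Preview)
Your outline is correct and follows Kuznetsov's original argument in \cite{kuz09}, which is indeed the ultimate source of this result. Note, however, that the paper does not supply its own proof of this proposition: it is stated with a reference to \cite[Prop.~2.25]{Macrisurvey} (who in turn cites \cite{kuz09}) and is used as a black box. So there is nothing to compare your approach against beyond observing that you have faithfully sketched the standard proof from the cited literature.
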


\begin{lemma}\cite[Example 2.23]{Macrisurvey}\label{lem: HH of except}
    Let $E\in \calD$ be an exceptional object. Then $\HH_\bullet(\langle E\rangle)=k$ and is concentrated in degree 0.
\end{lemma}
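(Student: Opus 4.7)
The plan is to identify the subcategory $\langle E \rangle$ generated by an exceptional object with the derived category of a point, and then invoke the standard Hochschild homology computation there.

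\emph{First step: identify $\langle E\rangle$ with $D^b(k\text{-mod})$.} Since $E$ is exceptional, $\Hom^\bullet_\calD(E,E) = k$ concentrated in degree $0$. Consider the functor
\[
\Phi\colon D^b(k\text{-mod}) \longrightarrow \calD, \qquad V^\bullet \longmapsto V^\bullet \otimes_k E.
\]
Using that $\Hom^\bullet(E,E)=k$, one verifies that $\Phi$ is fully faithful on the generator $k \in D^b(k\text{-mod})$ and hence on all of $D^b(k\text{-mod})$; by construction, its essential image is the smallest triangulated subcategory containing $E$ and closed under direct summands, which is precisely $\langle E \rangle$. Thus $\langle E \rangle \simeq D^b(k\text{-mod})$.

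\emph{Second step: compute Hochschild homology of a point.} Hochschild homology is an invariant of triangulated categories (via their canonical dg-enhancements), so the equivalence from the previous step yields $\HH_\bullet(\langle E \rangle) \cong \HH_\bullet(D^b(k\text{-mod}))$. The right-hand side is by definition the Hochschild homology of $k$ as a $k$-algebra, which is $k$ in degree $0$ and zero in all other degrees (the bar complex is trivial since $k \otimes_k k = k$).

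\emph{Main obstacle.} There is no serious hard step here; the only point that requires care is the invariance of $\HH_\bullet$ under derived equivalence, which is built into the dg-categorical definition used in \cite{Macrisurvey}. Once this is granted, the lemma is a formal consequence of the definition of an exceptional object together with the computation over $\Spec k$.
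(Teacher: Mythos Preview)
Your argument is correct and is the standard one: an exceptional object spans an admissible subcategory equivalent to $D^b(\Spec k)$, and Hochschild homology is a Morita invariant of the enhancement, so one is reduced to $\HH_\bullet(k)=k$. The paper itself gives no proof of this lemma; it simply imports the statement from \cite[Example~2.23]{Macrisurvey}, where precisely this identification is made. So there is nothing to compare against --- you have supplied the argument the paper chose to omit.
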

For Calabi--Yau categories, the Hochschild homology and cohomology coincide, up to a shift: 
\begin{proposition}\label{prop: HH for CY}
    If $\calD$ is an $n$-Calabi--Yau category, then for all $i\in \bZ$ we have $$\HH^i(\calD)\cong \HH_{i-n}(\calD).$$
\end{proposition}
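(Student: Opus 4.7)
The plan is to derive this identity by exploiting the general relationship between Hochschild homology and Hochschild cohomology mediated by the Serre functor. For any admissible subcategory $\calD$ of $D^b(X)$ with Serre functor $S_\calD$, Hochschild homology can be expressed as the Ext-groups of diagonal bimodules over $\calD\otimes \calD^{op}$, where the two bimodule structures differ by a twist by $S_\calD$. Concretely, writing $\Delta_*$ and $\Delta^!$ for the two natural diagonal bimodules, one has $\Delta^! \cong \Delta_*\circ S_\calD^{-1}$, so that
\begin{equation*}
\HH_i(\calD) \;=\; \Ext^i_{\calD\otimes\calD^{op}}(\Delta^!,\Delta_*) \qquad \text{and}\qquad \HH^i(\calD) \;=\; \Ext^i_{\calD\otimes\calD^{op}}(\Delta_*,\Delta_*).
\end{equation*}

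The key observation is that, in the $n$-Calabi--Yau setting, the Serre functor is simply the shift $[n]$, so the twist defining Hochschild homology becomes a degree shift. Substituting $S_\calD^{-1}=[-n]$ into the displayed identity gives
\begin{equation*}
\HH_i(\calD) \;=\; \Ext^i_{\calD\otimes\calD^{op}}(\Delta_*[-n],\Delta_*) \;=\; \Ext^{i+n}_{\calD\otimes\calD^{op}}(\Delta_*,\Delta_*) \;=\; \HH^{i+n}(\calD),
\end{equation*}
and reindexing by $j=i+n$ yields $\HH^{j}(\calD)\cong \HH_{j-n}(\calD)$ for all $j\in\bZ$, as claimed.

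The main (and only) subtle point is really setting up the right bimodule formalism so that the Serre-functor twist between $\HH_\bullet$ and $\HH^\bullet$ is available. I would handle this by simply citing \cite{kuz09}, where the relevant formalism for admissible subcategories and the comparison $\HH_i(\calD)\cong \Ext^i(\Delta^!,\Delta_*)$ with $\Delta^!\cong \Delta_*\circ S_\calD^{-1}$ is established. Given that input, the proposition reduces to absorbing a shift, so the whole argument fits in a few lines and does not require any additional geometric hypothesis on $\calD$ beyond being $n$-Calabi--Yau in the sense defined above.
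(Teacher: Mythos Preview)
The paper does not actually supply a proof of this proposition; it is stated in the preliminaries as a standard fact, with the surrounding discussion pointing to \cite{kuz09} and \cite{Macrisurvey} for the Hochschild formalism. Your argument is exactly the standard one behind this fact: once Hochschild homology and cohomology of an admissible subcategory are set up as Ext-groups between the diagonal bimodule and its Serre twist, the Calabi--Yau condition $S_\calD=[n]$ collapses the twist to a shift and gives $\HH^i(\calD)\cong \HH_{i-n}(\calD)$. So your proposal is correct and is precisely the argument one would cite from \cite{kuz09}; there is nothing to compare against in the paper itself.
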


One can use Hochschild cohomology to define the notion of connectedness for a category $\calD$. A triangulated category $\calD$ that is an admissible subcategory of $D^b(X)$ for some quasiprojective variety $X$ is called \textbf{connected} if $\HH^0(\calD)=k.$

\begin{lemma}[Bridgeland's trick]\cite[Lemma 2.30]{Macrisurvey}\label{lem: Bridgelandtrick}
    Let $\calD$ be an n-Calabi--Yau variety. If $\calD$ is connected, then $\calD$ is indecomposable.
\end{lemma}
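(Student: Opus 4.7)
The approach is to rule out any non-trivial semi-orthogonal decomposition of $\calD$ by using the Calabi--Yau property to produce a non-trivial idempotent in the endomorphism ring $\HH^0(\calD)$, contradicting the hypothesis that $\HH^0(\calD) = k$.

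Suppose for contradiction that $\calD = \langle \calA, \calB\rangle$ is a non-trivial semi-orthogonal decomposition, so both $\calA$ and $\calB$ are nonzero admissible subcategories with $\Hom_\calD(\calB, \calA[i]) = 0$ for all $i \in \bZ$. The first step is to upgrade this to a \emph{completely} orthogonal decomposition using the Calabi--Yau hypothesis. For any $A \in \calA$ and $B \in \calB$, Serre duality in the $n$-Calabi--Yau category $\calD$ gives
\[
\Hom_\calD(A, B) \cong \Hom_\calD(B, S_\calD(A))^\vee \cong \Hom_\calD(B, A[n])^\vee = 0,
\]
where the final vanishing uses $S_\calD = [n]$ together with semi-orthogonality. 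Hence $\calD = \calA \oplus \calB$ splits as a direct sum of mutually orthogonal subcategories.

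From this direct sum decomposition, every object $D \in \calD$ splits canonically as $D = \pi_\calA(D) \oplus \pi_\calB(D)$ with $\pi_\calA(D) \in \calA$ and $\pi_\calB(D) \in \calB$. The projectors $\pi_\calA, \pi_\calB$, viewed via the inclusion of each summand back into the direct sum, define endomorphisms of the identity functor, that is, elements of $\HH^0(\calD) = \mathrm{End}(\mathrm{id}_\calD)$. They satisfy $\pi_\calA + \pi_\calB = \mathrm{id}_\calD$, $\pi_\calA^2 = \pi_\calA$, and $\pi_\calA \circ \pi_\calB = 0$. Since both $\calA$ and $\calB$ are nonzero, $\pi_\calA$ is neither $0$ nor $\mathrm{id}_\calD$, so it is a non-trivial idempotent in $\HH^0(\calD)$. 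But the field $k$ contains no non-trivial idempotents, contradicting $\HH^0(\calD) = k$.

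The main subtle point is the passage from semi-orthogonality to complete orthogonality; everything else is formal manipulation of projectors in a direct sum. This is precisely where the Calabi--Yau hypothesis enters essentially, and without it the analogous statement would fail (e.g. $D^b(\bP^1)$ is connected but decomposable). Note that since any decomposition into more than two factors can be grouped into a two-factor decomposition, ruling out the two-factor case suffices.
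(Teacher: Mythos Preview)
The paper does not supply its own proof of this lemma; it is simply cited from \cite[Lemma 2.30]{Macrisurvey}. Your argument is correct and is precisely the standard ``Bridgeland's trick'': use $S_\calD=[n]$ to upgrade any semi-orthogonal decomposition to a completely orthogonal one, then contradict $\HH^0(\calD)=k$.

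One minor remark on presentation: instead of producing the idempotents in $\mathrm{End}(\mathrm{id}_\calD)$ directly, the argument can equivalently be phrased using the tools already recalled in the paper. Once $\calD=\calA\oplus\calB$ is orthogonal, each summand inherits the Serre functor $[n]$, so by \Cref{prop: HH for CY} and the additivity of Hochschild homology (\Cref{prop: HH additive}) one gets
\[
\HH^0(\calD)\cong \HH_{-n}(\calD)\cong \HH_{-n}(\calA)\oplus\HH_{-n}(\calB)\cong \HH^0(\calA)\oplus\HH^0(\calB),
\]
which has dimension at least $2$ since the identity natural transformation is nonzero in each summand. This is equivalent to your idempotent formulation, but avoids having to identify $\HH^0$ with $\mathrm{End}(\mathrm{id}_\calD)$ explicitly.
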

Finally, we can define $K3$ categories: a 2-Calabi--Yau category $\calD$ is called a \textbf{$K3$ category} if its Hochschild (co)homology coincides with the Hochschild (co)homology of a $K3$ surface. In particular, by \Cref{lem: Bridgelandtrick}, a $K3$ category is indecomposable.

A non-trivial example of a $K3$ category is the \textbf{Kuznetsov component} $\Ku(X)$ of a cubic fourfold $X\subset \bP^5$, defined as the right orthogonal complement of $\langle \mathcal{O}_X, \mathcal{O}_X(1), \mathcal{O}_X(2)\rangle$ in $D^b(X)$ \cite{Kuz10}. 

\subsection{$G$-equivariant categories}\label{subsec: Gequiv}
We recall some basic definitions of categorical actions and equivariant categories, following \cite{beckmann2020}. Throughout, let $\calD$ be a triangulated category, and $G$ a finite group.

\begin{definition}\cite[Definition 2.1]{beckmann2020}
    An {\bf action} $(\rho, \theta)$ of $G$ on $\calD$ consists of
    \begin{itemize}
        \item an auto-equivalence $\rho_g:\calD\rightarrow \calD$ for every $g\in G$; and 
        \item an isomorphism of functors $\theta_{g,h}:\rho_g\circ\rho_h\rightarrow \rho_{gh}$ for every pair $g,h\in G$,
    \end{itemize}
    such that the appropriate diagrams induced from the group law of $G$ commute.
\end{definition}
In this paper, $\calD$ will always be $D^b(X)$ for some variety $X$, or $\Ku(X)$ where $X$ is a cubic fourfold. The group $G$ will always be $\bZ/2\bZ$, and the action on $\calD$ will either be trivial or induced by an automorphism of $X$. Thus there is only one non-trivial auto-equivalence given by $\phi^*\colon \calD\rightarrow \calD$, where $\phi\in \Aut(X)$ is the generator of $G$.

\begin{definition}
    Let $G=\bZ/2\bZ$ with generator $\phi^*$  acting on $\calD$ as above. Then the {\bf equivariant category} $\calD_G$ is defined as follows:
    \begin{itemize}
        \item Objects are pairs $(E, \varphi)$ where $E\in \calD$ and $\varphi$ is an isomorphism (or linearisation)  $\varphi\colon E\xrightarrow{\sim} \phi^*E$, compatible under composition. 
        \item A morphism from $(E, \varphi)$ to $(E',\varphi')$ is a morphism $E\rightarrow E'$ that commutes with linearisations.
    \end{itemize}
\end{definition}

Note that $\Hom_{\calD_G}((E,\varphi), (E', \varphi'))= (\Hom_\calD(E,E'))^G.$

\begin{remark}
    Let $\calD$ be either $D^b(X)$ or $\Ku(X)$, with $G=\bZ/2\bZ=\langle \phi\rangle\subset \Aut(X)$.
    Then $E\in \calD$ being $G$-linearised is equivalent to $E$ being invariant under the pullback $\phi^*$. 
\end{remark}

We will need to know how semi-orthogonal decompositions behave under taking the equivariant category. 
\begin{theorem}\cite[Theorem 6.3]{Ela11}\label{thm: Ela11}
    Let $X$ be a quasi-projective variety, with an action of a finite group $G$. Let $D^b(X)=\langle \calA_1, \dots, \calA_n\rangle$ be a semi-orthogonal decomposition preserved by $G$. Then there is a semi-orthogonal decomposition of equivariant categories
    \[D^b_G(X)=\langle \calA_{1 G},\dots \calA_{n G}\rangle.\]
\end{theorem}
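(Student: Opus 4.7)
The plan is to build each $\calA_{iG}$ as the $G$-equivariant category of the $G$-stable admissible subcategory $\calA_i$, and then check the three axioms of a semi-orthogonal decomposition of $D^b_G(X)$: an admissible embedding $\calA_{iG}\hookrightarrow D^b_G(X)$, semi-orthogonality of the collection, and generation.

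First I would observe that since $\calA_i$ is preserved by the $G$-action, the equivariant category $\calA_{iG}$ is well-defined, and the natural functor $\calA_{iG}\to D^b_G(X)$ coming from the inclusion $\calA_i\hookrightarrow D^b(X)$ is fully faithful: this follows from the identity $\Hom_{D^b_G(X)}((E,\varphi),(E',\varphi')) = \Hom_{D^b(X)}(E,E')^G$ combined with full faithfulness of the underlying inclusion. The same identity gives semi-orthogonality for free: for $i>j$, $(E,\varphi)\in \calA_{iG}$, and $(E',\varphi')\in\calA_{jG}$, the Hom-group is the $G$-invariants of a vanishing group, hence zero.

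The generation step is the heart of the argument. Let $\pi_i\colon D^b(X)\to\calA_i$ denote the projection functors associated to the SOD, assembled from the adjoints of the inclusions $\calA_i\hookrightarrow D^b(X)$. Since $G$ preserves each $\calA_i$ together with its orthogonals, the uniqueness of adjoints yields a natural isomorphism $\phi^*\circ\pi_i\cong \pi_i\circ\phi^*$ for each $i$. Given an equivariant object $(E,\varphi)\in D^b_G(X)$, applying $\pi_i$ to $\varphi\colon E\to \phi^*E$ and composing with this natural isomorphism produces a linearization of $\pi_i(E)$, so each projection lifts to an object of $\calA_{iG}$. The canonical mutation triangles that decompose $E$ according to the SOD are built from $\pi_i$ and the inclusions, and hence assemble into distinguished triangles in $D^b_G(X)$ that exhibit $(E,\varphi)$ as successive extensions of objects from the $\calA_{iG}$.

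The main obstacle I expect is the coherence check: one has to verify that the natural isomorphism $\phi^*\circ\pi_i\cong\pi_i\circ\phi^*$ is compatible with the 2-cocycle data $\theta_{g,h}$ of the $G$-action, so that the induced linearizations on the $\pi_i(E)$ satisfy the cocycle condition whenever $\varphi$ does. This reduces to unwinding the construction of the projections as iterated mutations and tracking the $G$-action through each step, which is bookkeeping-heavy but formal. Once this coherence is in place, the three axioms are established and the desired semi-orthogonal decomposition $D^b_G(X)=\langle\calA_{1G},\dots,\calA_{nG}\rangle$ follows.
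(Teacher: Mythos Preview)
The paper does not supply its own proof of this statement; it is quoted verbatim from \cite[Theorem 6.3]{Ela11} and used as a black box. Your outline is essentially the standard argument and matches the strategy of Elagin's original proof: establish full faithfulness and semi-orthogonality via the formula $\Hom_{D^b_G(X)}=(\Hom_{D^b(X)})^G$, then lift the projection functors $\pi_i$ to $G$-functors to obtain both generation and the adjoints needed for admissibility.

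Two small points worth tightening. First, you claim ``admissible embedding'' but only argue full faithfulness; admissibility requires left and right adjoints, and these come precisely from the lifted projections you construct in the generation step, so you should make that dependence explicit. Second, one needs the equivariant categories $\calA_{iG}$ to be triangulated in order to speak of a semi-orthogonal decomposition at all, and this is not automatic for an abstract triangulated category with $G$-action. In the geometric situation at hand (and in Elagin's paper) this is handled either by working with the natural DG enhancement or by identifying $D^b_G(X)$ with the derived category of $G$-equivariant coherent sheaves; your sketch implicitly relies on this but does not say so. With these two clarifications, your argument is complete and aligns with the cited reference.
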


\begin{proposition}\cite[Proposition 3.3]{KP16}\label{prop: KP16}
    Let $\calA$ be a triangulated category with a trivial action of a finite group $G$. If $\calA_G$ is also triangulated, then there is a completely orthogonal decomposition:

    $$\calA_G= \langle \calA_G\otimes V_0,\dots \calA_G\otimes V_n\rangle,$$ where $V_0,\dots V_n$ are all irreducible representations of the finite group $G$.
\end{proposition}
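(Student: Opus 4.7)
The plan is to exploit the fact that, when $G$ acts trivially on $\calA$, a $G$-linearization on an object $E \in \calA$ is nothing but a homomorphism $G \to \Aut(E)$, i.e.\ a $G$-representation structure on $E$ internal to $\calA$. Over a field of characteristic zero the group algebra $k[G]$ is semisimple, and so such a structure decomposes canonically into isotypic components indexed by the irreducible representations $V_0,\dots,V_n$ of $G$. The claimed orthogonal decomposition simply records this isotypic decomposition categorically.

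First I would construct, for each irreducible $V_i$, a fully faithful embedding $\iota_i \colon \calA \to \calA_G$ sending $E \mapsto (E \otimes_k V_i,\, \mathrm{id}_E \otimes \rho_i)$, where $\rho_i$ denotes the tautological $G$-action on $V_i$; denote its essential image by $\calA \otimes V_i$. Complete orthogonality between $\calA \otimes V_i$ and $\calA \otimes V_j$ for $i \neq j$ then falls out of Schur's lemma: for all $E, E' \in \calA$,
$$\Hom_{\calA_G}(E \otimes V_i,\, E' \otimes V_j) = \bigl(\Hom_{\calA}(E,E') \otimes \Hom_k(V_i, V_j)\bigr)^G = \Hom_{\calA}(E,E') \otimes \Hom_G(V_i, V_j) = 0.$$

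To see that these subcategories generate $\calA_G$, I would take any $(E, \varphi) \in \calA_G$ and split off its isotypic components via the central idempotents $e_i = \tfrac{\dim V_i}{|G|} \sum_{g \in G} \overline{\chi_{V_i}(g)}\, g \in k[G]$, which act as endomorphisms of $(E, \varphi)$ through $\varphi$. Setting $E_i := \im(e_i)$, the identity $\sum_i e_i = 1$ yields an isomorphism $(E, \varphi) \cong \bigoplus_i E_i$ in $\calA_G$ with each $E_i$ lying in $\calA \otimes V_i$, which together with the orthogonality above produces the claimed decomposition.

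The main obstacle is justifying this last step purely at the triangulated level: splitting non-trivial idempotents requires idempotent completeness, which is not automatic in a bare triangulated category. This is precisely where the hypothesis that $\calA_G$ is itself triangulated enters — combined in practice with a choice of enhancement so that the Karoubian envelope is already contained in $\calA_G$. Once this technical point is granted, the remaining verifications reduce to standard representation theory of $G$ and the completely orthogonal decomposition follows immediately.
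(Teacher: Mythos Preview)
The paper does not give a proof of this proposition; it is simply quoted from \cite[Proposition 3.3]{KP16} and used as a black box. Consequently there is no ``paper's own proof'' to compare against.

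Your sketch is the standard argument and is correct in outline: for a trivial action, a linearization on $E$ is precisely a homomorphism $G\to\Aut_{\calA}(E)$, and the isotypic decomposition via the central idempotents of $k[G]$ yields the required splitting, with complete orthogonality following from Schur's lemma exactly as you wrote. Two small remarks. First, the pieces in the decomposition are copies of $\calA$ (embedded via $E\mapsto E\otimes V_i$), not of $\calA_G$; your notation $\calA\otimes V_i$ is the correct one, and the statement as written in the paper is slightly loose on this point. Second, your caveat about idempotent splitting is well placed but not fully resolved by the hypothesis ``$\calA_G$ is triangulated'' alone, since triangulated categories need not be Karoubian; in \cite{KP16} this is handled by working throughout with idempotent-complete (enhanced) categories, so that the images of the central idempotents exist. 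Once that standing assumption is granted, your argument goes through verbatim.
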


Since irreducible representations $V_i$ are completely determined by their character $\chi_i$, we denote by $\calA_G\otimes \chi_i= \calA_G\otimes V_i$.

In particular, if $X\subset \bP^5$ is a cubic fourfold, then the derived category has the semi-orthogonal decomposition:
$$D^b(X)=\langle \Ku(X), \calO_X, \calO_X(1), \calO_X(2)\rangle.$$ If further $G=\bZ/2\bZ\subset \Aut(X),$ then $G$ preserves $\Ku(X), \calO_X, \calO_X(1)$ and $\calO_X(2).$ It follows that $\Ku(X)$ inherits an action by $G$. Applying the above results gives a semi-orthogonal decomposition of $D^b_G(X)$:

\begin{lemma}
    Let $X\subset \bP^5$ be a smooth cubic fourfold with $G=\bZ/2\bZ\subset \Aut(X).$ Then $G$ acts on $D^b(X)$ and $\Ku(X)$, and we have:
    $$D^b_G(X)=\langle \Ku_G(X), \calO_X, \calO_X\otimes \chi_1, \calO_X(1), \calO_X(1)\otimes \chi_1, \calO_X(2), \calO_X(2)\otimes \chi_1\rangle,$$ where $\chi_1$ is the non-trivial character of $G$.
\end{lemma}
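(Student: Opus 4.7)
The plan is to assemble the decomposition in two stages: first apply Elagin's theorem (\Cref{thm: Ela11}) to lift the standard Kuznetsov semi-orthogonal decomposition of $D^b(X)$ to the equivariant category $D^b_G(X)$, and then refine the block coming from each line bundle using the completely orthogonal decomposition of \Cref{prop: KP16}.

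First I would check that $G$ acts on each piece of the SOD $D^b(X)=\langle \Ku(X), \calO_X, \calO_X(1), \calO_X(2)\rangle$. Since $X\subset \bP^5$ is a cubic hypersurface, any automorphism of $X$ extends to a linear automorphism of $\bP^5$; in particular $\phi^*\calO_{\bP^5}(1)\cong \calO_{\bP^5}(1)$ and hence $\phi^*\calO_X(i)\cong \calO_X(i)$ for all $i$. Therefore each subcategory $\langle \calO_X(i)\rangle$ is $G$-stable, and since $\Ku(X)$ is defined as the left orthogonal to $\langle \calO_X, \calO_X(1), \calO_X(2)\rangle$ inside $D^b(X)$, it is also preserved by the $G$-action on $D^b(X)$. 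Applying \Cref{thm: Ela11} then yields a semi-orthogonal decomposition
$$D^b_G(X) = \langle \Ku_G(X), \langle \calO_X\rangle_G, \langle \calO_X(1)\rangle_G, \langle \calO_X(2)\rangle_G\rangle.$$

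Next I would identify each of the blocks $\langle \calO_X(i)\rangle_G$. Since $\calO_X(i)$ is an exceptional object with $\phi^*\calO_X(i)\cong \calO_X(i)$, the subcategory $\langle \calO_X(i)\rangle\simeq D^b(\mathrm{pt})$ inherits a $G$-action that, after transport of structure, is the trivial action (the only ambiguity is a choice of linearisation, corresponding to the two characters of $G$). Hence \Cref{prop: KP16} applies and gives a completely orthogonal decomposition
$$\langle \calO_X(i)\rangle_G = \langle \calO_X(i)\otimes \chi_0,\ \calO_X(i)\otimes \chi_1\rangle = \langle \calO_X(i),\ \calO_X(i)\otimes \chi_1\rangle,$$
where we identify the trivial linearisation with the trivial character $\chi_0$.

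Splicing these refinements into the earlier decomposition gives the claimed seven-term semi-orthogonal decomposition of $D^b_G(X)$. The only genuinely delicate point is the justification that the induced $G$-action on each $\langle \calO_X(i)\rangle$ is equivalent to the trivial action, so that \Cref{prop: KP16} is applicable; this is where I expect the main (very mild) obstacle to lie, and it is settled by the existence of the isomorphism $\phi^*\calO_X(i)\xrightarrow{\sim} \calO_X(i)$ coming from the linearity of $\phi$ together with the involutivity $\phi^2=\mathrm{id}$.
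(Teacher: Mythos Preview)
Your proposal is correct and follows exactly the route the paper takes: apply \Cref{thm: Ela11} to the $G$-stable Kuznetsov decomposition of $D^b(X)$, then refine each exceptional block $\langle \calO_X(i)\rangle_G$ via \Cref{prop: KP16}. The paper states the lemma without a formal proof, merely noting that it follows from ``the above results''; your write-up supplies precisely the details (linearity of $\phi$, stability of each $\calO_X(i)$, triviality of the induced action on $\langle\calO_X(i)\rangle$) that the paper leaves implicit.
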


\subsection{Root stacks}\label{subsec: root stack}

Let $Z$ be a smooth projective variety over an algebraically closed field $k$ of characteristic zero. Letting $\mathbb{G}_m$ act on $\mathbb{A}^1_k$, there is an equivalence between morphisms $Z\rightarrow [\mathbb{A}^1_k/\mathbb{G}_m]$ to the quotient stack $[\mathbb{A}^1_k/\mathbb{G}_m]$ and pairs $(L,s)$, where $L$ is an invertible sheaf on $Z$ and $s\in \Gamma(X,L)$ \cite[5.13]{Ols03}.  

For $D$ an effective Cartier divisor on $Z$ and $n$ a positive integer, the \textbf{$n$-th root stack} $\sqrt[n]{Z/D}$ is the fiber product
\[
\begin{tikzcd}
\sqrt[n]{Z/D} \arrow[r]\arrow[d] & {[\mathbb{A}^1_k/\mathbb{G}_m]}\arrow[d, "\theta_n"] \\
Z \arrow[r, "\delta"] & {[\mathbb{A}^1_k/\mathbb{G}_m]} , 
\end{tikzcd}
\]
where $\theta_n\colon [\mathbb{A}^1/\mathbb{G}_m]\rightarrow [\mathbb{A}^1_k/\mathbb{G}_m]$ is the morphism induced by taking $n$-th powers of both $\mathbb{A}^1_k$ and $\mathbb{G}_m$ and $\delta\colon Z\rightarrow [\mathbb{A}^1_k/\mathbb{G}_m]$ is the morphism associated to the pair $(\mathcal{O}(D), s_D)$ with $s_D$ the tautological section of $\mathcal{O}_Z(D)$ vanishing on $D$. See \cite{Cad07}, \cite{AGV08} for more on this construction. The $n$-th root stack $\sqrt[n]{Z/D}$ is a Deligne--Mumford stack \cite[Corollary 2.3.4]{Cad07}. Informally, we may view the root stack construction as modifying $Z$ along the divisor $D$ so as to get a stack with stablizer $\mu_n$ along $D$. 

In the setting of this paper, if $G=\mathbb{Z}/2\bZ$ acts on a variety $\widetilde{X}$ with fixed locus the divisor $D$ and $Z=\widetilde{X}/G$, then we obtain an identification of stacks:
\[\sqrt[2]{Z/D}=[\widetilde{X}/G],\]
where $[\widetilde{X}/G]$ is the quotient stack. 

We will describe $D^b([\widetilde{X}/G])$ by using the following description of $D^b\left(\sqrt[n]{Z/D} \right)$.

\begin{theorem}\label{thm: BD thm}\cite[Theorem 1.6]{IU15}, \cite[Theorem 1.2]{BD24} There is a semi-orthogonal decomposition
\[
D^b\left(\sqrt[n]{Z/D}\right)=\left\langle D^b(Z), D^b(D), D^b(D)\otimes \mathcal{O}(1), \ldots, D^b(D)\otimes \mathcal{O}(n-2)  \right \rangle. 
\]
    
\end{theorem}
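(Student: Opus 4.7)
The plan is to construct explicit fully faithful functors realising each listed factor, verify their semi-orthogonality, and then address generation.

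First, I would record the geometric input. Writing $\mathscr{X}=\sqrt[n]{Z/D}$ and $\pi\colon \mathscr{X}\to Z$ for the coarse moduli map (an isomorphism away from $D$), the construction provides a tautological invertible sheaf $\mathcal{L}=\mathcal{O}_{\mathscr{X}}(1)$ together with a canonical section cutting out a closed substack $\iota\colon\mathcal{D}\hookrightarrow\mathscr{X}$ that is a $\mu_n$-gerbe $p\colon\mathcal{D}\to D$; one has $\mathcal{L}^{\otimes n}\cong\pi^*\mathcal{O}_Z(D)$, and $\mathcal{L}|_{\mathcal{D}}$ generates the character lattice of the gerbe. All of these assertions can be checked on the étale-local model $[\mathrm{Spec}(\mathcal{O}_Z[t]/(t^n-s))/\mu_n]$, where $s$ locally cuts out $D$.

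Next, I would define candidate embeddings $\Phi_{-1}=L\pi^*\colon D^b(Z)\to D^b(\mathscr{X})$ and, for $k=0,\ldots,n-2$, $\Phi_k\colon D^b(D)\to D^b(\mathscr{X})$ by $E\mapsto \iota_*(p^*E)\otimes\mathcal{L}^{\otimes k}$. Fully faithfulness of $L\pi^*$ follows from $R\pi_*\mathcal{O}_{\mathscr{X}}=\mathcal{O}_Z$, which is the basic property of a tame coarse moduli map to a smooth scheme. Fully faithfulness of each $\Phi_k$ reduces to the weight decomposition $D^b(\mathcal{D})\cong\bigoplus_{j=0}^{n-1}D^b(D)\otimes\chi^j$ attached to the $\mu_n$-gerbe: $p^*$ embeds $D^b(D)$ as the weight-zero summand, and $\iota_*$ is fully faithful on each single-weight component; tensoring by $\mathcal{L}^{\otimes k}$ is an auto-equivalence and hence preserves this property.

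Semi-orthogonality would then be checked by adjunction: the Hom groups between the images of distinct $\Phi_k$ reduce to $R\pi_*$ or $Rp_*$ applied to appropriate powers of $\mathcal{L}$ tensored with the source objects. The key vanishing is that $R\pi_*\mathcal{L}^{\otimes j}=0$ for $-(n-1)\le j\le -1$, which étale-locally is the statement that the character-$j$ eigenspace of $\mathcal{O}_Z[t]/(t^n-s)$ admits no $\mu_n$-invariants in that range; combined with the weight orthogonality on the gerbe, this gives precisely the triangular pattern of Hom vanishing matching the stated ordering of the semi-orthogonal decomposition.

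The main obstacle is generation. My approach would be a dévissage: given $\mathcal{F}\in D^b(\mathscr{X})$ right-orthogonal to every listed subcategory, orthogonality to $L\pi^*D^b(Z)$ and the projection formula give $R\pi_*\mathcal{F}=0$, so $\mathcal{F}$ is set-theoretically supported on $\mathcal{D}$; filtering $\mathcal{F}$ by order of vanishing along $\mathcal{D}$ and analysing the $\mu_n$-weight decomposition of each graded piece, orthogonality to $\iota_*(p^*(-))\otimes\mathcal{L}^{\otimes k}$ for $k=0,\ldots,n-2$ kills each weight in turn. The delicate point is ensuring that the successive weights account for every graded piece and that the argument terminates; an alternative, closer to \cite{BD24}, is to establish admissibility of the constructed subcategories and then match total Hochschild homology on both sides (using Proposition~\ref{prop: HH additive} and the known additivity for stacks) to conclude the decomposition is complete, bypassing the explicit dévissage.
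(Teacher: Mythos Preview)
The paper does not prove this theorem: it is quoted as an input from \cite{IU15} and \cite{BD24} and used as a black box in \Cref{prop: second decomp}. There is therefore no proof in the paper to compare your proposal against.

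That said, your outline is broadly the standard route taken in the cited references: the functors $L\pi^*$ and $\iota_*(p^*(-))\otimes\mathcal{L}^{\otimes k}$ are the correct embeddings, full faithfulness on each weight component follows exactly from the normal-bundle computation you indicate (the conormal of $\mathcal{D}\subset\mathscr{X}$ has weight $-1$, so the self-$\Ext$ correction term lands in a different weight), and the semi-orthogonality reduces to the eigenspace vanishing you state. One genuine gap is your proposed alternative for generation. Matching Hochschild homology does \emph{not} suffice: if the images of your functors generate an admissible subcategory $\calA\subset D^b(\mathscr{X})$ with $\HH_\bullet(\calA)=\HH_\bullet(D^b(\mathscr{X}))$, the orthogonal complement is a category with vanishing Hochschild homology, and such phantom categories are known to exist. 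So you cannot bypass the d\'evissage. The d\'evissage itself is correct in spirit but needs the extra observation that the ``missing'' weight $n-1$ on each graded piece is already accounted for by orthogonality to $L\pi^*D^b(Z)$ together with the twist by the conormal bundle relating successive graded pieces; without making this explicit the induction does not close.
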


\section{Hochschild homology of $\Ku_G(X)$}\label{sec:indecomp}

Throughout, we let $k$ be an algebraically closed field of characteristic zero and $X\subset \bP^5_k$ be a general cubic fourfold over $k$ admitting a symplectic involution $\phi\in \Aut(X).$ The geometry of such involutions were studied in \cite{Marq23}. We let $G:=\langle \phi\rangle \cong \bZ/2\bZ$. 

In this section we prove the following:
\begin{proposition}\label{prop: indecomp}
    The equivariant Kuznetsov component $\Ku_G(X)$ is a $K3$ category.
\end{proposition}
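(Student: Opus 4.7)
The plan is to realize $\Ku_G(X)$ inside the equivariant derived category $D^b_G(\tX)$ of the blow-up $\tX:=\Bl_{\Fix_G(X)}X$ in two different ways, and to extract $\HH_\bullet(\Ku_G(X))$ by comparing the resulting decompositions via \Cref{prop: HH additive}. By \cite{Marq23} the fixed locus $\Fix_G(X)=S\sqcup\ell$ is the disjoint union of the $K3$ surface $S$ and a line $\ell$. Symplecticity of $\phi$ forces the induced action on both normal bundles $N_{S/X}$ and $N_{\ell/X}$ to be by $-1$, so the action on the exceptional divisors $E_S=\bP(N_{S/X})$ and $E_\ell=\bP(N_{\ell/X})$ is trivial. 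Consequently $\Fix_G(\tX)=E_S\sqcup E_\ell$ is a smooth divisor, in agreement with $\tX/G$ being smooth as in \Cref{prop:intro}.

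For the first decomposition, I would combine Orlov's blow-up formula for $\tX\to X$ with the Kuznetsov semi-orthogonal decomposition of $D^b(X)$. Since $S$ has codimension $2$ and $\ell$ has codimension $3$ in $X$, this produces
\[
D^b(\tX)=\bigl\langle \Ku(X),\,\calO_X,\,\calO_X(1),\,\calO_X(2),\,D^b(S),\,D^b(\ell),\,D^b(\ell)\bigr\rangle.
\]
All pieces are $G$-stable. Applying \Cref{thm: Ela11} and then \Cref{prop: KP16} on the factors where $G$ acts trivially (character twists coming from the action on $\calO_{E_\bullet}(k)$ do not change the equivariant category since $G$ acts trivially on the base), I obtain a semi-orthogonal decomposition of $D^b_G(\tX)$ containing $\Ku_G(X)$ as an admissible subcategory, together with six equivariant exceptional line bundles, two copies of $D^b(S)$, and four copies of $D^b(\ell)$.

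For the second decomposition, I would use the root stack description. Because $\Fix_G(\tX)$ is a smooth divisor, $[\tX/G]$ is identified with the root stack $\sqrt[2]{(\tX/G)/D}$, where $D\cong E_S\sqcup E_\ell$ is the image of the fixed divisor in $\tX/G$. Combining \Cref{thm: BD thm} with $n=2$ and the equivalence $D^b_G(\tX)=D^b([\tX/G])$ gives
\[
D^b_G(\tX)=\bigl\langle D^b(\tX/G),\,D^b(E_S),\,D^b(E_\ell)\bigr\rangle.
\]
By \Cref{prop:intro} and Orlov's blow-up formula, $D^b(\tX/G)=\langle D^b(\bP^1\times\bP^3),\,D^b(S)\rangle$, with $D^b(\bP^1\times\bP^3)$ admitting a Beilinson exceptional collection of length~$8$; the projective bundle formula further gives $D^b(E_S)=\langle D^b(S),D^b(S)\rangle$ and $D^b(E_\ell)=\langle D^b(\ell),D^b(\ell),D^b(\ell)\rangle$.

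Finally, I would apply \Cref{prop: HH additive} to both decompositions and use \Cref{lem: HH of except} along with $\HH_\bullet(\ell)=\HH_\bullet(\bP^1)=k^{\oplus 2}$ concentrated in degree zero. The first decomposition contributes $\HH_\bullet(\Ku_G(X))+14k+2\,\HH_\bullet(S)$ and the second contributes $14k+3\,\HH_\bullet(S)$. Equating yields $\HH_\bullet(\Ku_G(X))=\HH_\bullet(S)$, matching the Hochschild homology of a $K3$ surface. Combined with the $2$-Calabi--Yau property of $\Ku_G(X)$ recalled from \cite[Lemma 6.5]{beckmann2020}, this shows that $\Ku_G(X)$ is a $K3$ category. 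The main obstacle I expect is the careful bookkeeping of characters when applying \Cref{prop: KP16} in the first decomposition, and verifying that the root stack structure on $[\tX/G]$ is compatible with the explicit model furnished by \Cref{prop:intro}.
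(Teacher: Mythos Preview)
Your overall strategy matches the paper's exactly: produce two semi-orthogonal decompositions of $D^b_G(\tX)$---one via Orlov's blow-up formula and \Cref{thm: Ela11}, the other via the root stack description and \Cref{thm: BD thm}---and compare Hochschild homology. The gap is a misidentification of the fixed locus. The surface component of $\Fix_G(X)$ is \emph{not} the $K3$ surface $S$; it is the cubic surface $\Sigma:=X\cap\Pi$, where $\Pi=V(x_4,x_5)\cong\bP^3$ is the pointwise-fixed $3$-plane (see \Cref{subsec:geometry of X}). The $K3$ surface $S$ lives in $F(X)$, not in $X$, and enters only through \Cref{prop:intro}, which identifies $\tX/G\cong\Bl_S(\bP^1\times\bP^3)$. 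Accordingly, your first decomposition should contain two copies of $D^b(\Sigma)$ rather than $D^b(S)$, and in the second decomposition the relevant divisor is $D_\Sigma$, a $\bP^1$-bundle over $\Sigma$, not over $S$.

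Your arithmetic happens to balance because you have introduced the same spurious summand $2\,\HH_\bullet(S)$ on both sides: once from the (incorrect) copies of $D^b(S)$ in the first decomposition, and once from the (incorrect) $D^b(E_S)$ in the second. In the correct computation these are replaced on each side by $2\,\HH_\bullet(\Sigma)=k^{18}$, since a smooth cubic surface carries a full exceptional collection; the total count on each side is then $32$ exceptional objects, and the single occurrence of $\HH_\bullet(S)$ comes solely from $D^b(\tX/G)$ via \Cref{semi-orth 1}. This asymmetry is precisely what forces $\HH_\bullet(\Ku_G(X))=\HH_\bullet(S)$; your version obtains the right answer only by an accidental cancellation that would not survive if, say, one checked the two decompositions independently against a direct computation of $\HH_\bullet(D^b_G(\tX))$.
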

Since the action of $G$ on $X$ is symplectic, the equivariant Kuznetsov component $\Ku_G(X)$ is a 2-Calabi--Yau category (see \cite[Sections 6.3, 6.4]{beckmann2020}). Thus the main content of this section is to prove that the Hochschild (co)homology of $\Ku_G(X)$ coincides with that of a $K3$ surface.

By \Cref{thm: Ela11}, the equivariant Kuznetsov component $\Ku_G(X)$ appears as an admissible subcategory of the equivariant category $D^b_G(X)$. Classically, the Mckay correspondence \cite{BKR01} is used to understand $D^b_G(X)$ -- a crepant resolution of the quotient $X/G$ is constructed using the $G$-Hilbert scheme. However, this approach only works when the fixed locus is equidimensional, which as we explain in \Cref{subsec:geometry of X}, is not the case here. Instead, we will first blow up the fixed locus of $G$ and extend the action to $\widetilde{X}:=\Bl_{\Fix_G(X)}X$. In \Cref{subsec: Geometry of quotient}, we study the geometry of both $\widetilde{X}$ and $\widetilde{X}/G$. In \Cref{subsec: two decomps for equiv}, we obtain two different semi-orthogonal decompositions of $D^b_G(\widetilde{X})$. Finally, in \Cref{subsec: study HH}, we study the Hochschild homology of $D^b_G(\tX)$ via these decompositions in order to prove \Cref{prop: indecomp}.

\subsection{The geometry of $X$}\label{subsec:geometry of X}
Up to a linear change of coordinates, we can assume \[\phi\colon [x_0,x_1,x_2,x_3,x_4,x_5]\mapsto [x_0, x_1, x_2, x_3, -x_4, -x_5],\] 
and so $X\subset \bP^5$ has equation of the form
 \begin{align*}\label{eqn of X}
     g(x_0, x_1, x_2, x_3)+&x_4^2l_1(x_0, x_1,x_2,x_3)+2x_4x_5l_2(x_0,x_1,x_2,x_3)+x_5^2l_3(x_0,x_1,x_2,x_3)=0,
    \end{align*} 
where $l_1, l_2, l_3\in k[x_0,\dots, x_3]$ are linear forms and $g\in k[x_0,\dots, x_3]$ is a homogeneous cubic polynomial. We will also use that $X$ is of the equivalent form  
\begin{equation}\label{eqn: X in quadric form}
   g(x_0, x_1, x_2, x_3)+ x_0q_0(x_4,x_5)+x_1q_1(x_4,x_5)+x_2q_2(x_4,x_5)+x_3q_3(x_4,x_5)=0, 
\end{equation}
where $q_0, q_1, q_2, q_3 \in k[x_0,\dots, x_3]$ are homogeneous quadratic polynomials.

As an action on $\bP^5$, the involution $\phi$ has fixed locus given by the following line and projective $3$-plane:
\begin{align*}
L&:=V(x_0,x_1,x_2,x_3)\subset \bP^5,\\
\Pi&:=V(x_4,x_5)\subset \bP^5.
\end{align*}
The fixed locus of the action of $G$ on $X$ is then given by $\Fix_G(X)=L\cup \Sigma,$ where $\Sigma=\Pi\cap X.$ Note that, in equations, we have $\Sigma:=V(g(x_0,x_1,x_2,x_3))\subset \Pi$, so $\Sigma$ is a cubic surface.

The line $L\subset \Fix_G(X)$ gives a $G$-conic bundle structure on $X$ by projecting onto the disjoint $\bP^3$ given by $\Pi$, the geometry of which was studied in \cite[Section 4.2]{Marq23}. We let $\pi\colon \Bl_LX\rightarrow \Pi$ be the conic bundle with induced $G$-action. The discriminant locus is the union $\Sigma\cup Q$ where $Q:=V(l_1l_3-l_2^2)$ is a quadric cone. Let 
$$\tau\colon S\rightarrow \Sigma$$ be the component of the discriminant double cover (branched over $\Sigma\cap Q)$ associated to the conic bundle $\pi$. Then $S$ is a $K3$ surface, equipped with an anti-symplectic involution (the covering involution). We note the following:

\begin{lemma}\label{lem: K3}
    The $K3$ surface $S$ is naturally identified with a component of the fixed locus for the induced action of $G$ on the Fano variety of lines $F(X).$
\end{lemma}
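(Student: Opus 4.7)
The plan is to construct a natural morphism $S \to \Fix_G(F(X))$ and argue that it is an open immersion onto a connected component. First I classify the $\phi$-invariant lines in $\bP^5$: since the fixed locus of $\phi$ on $\bP^5$ is $L \sqcup \Pi$, any $\phi$-invariant line is either $L$ itself, is contained in $\Pi$, or is the join of a point of $L$ with a point of $\Pi$. Restricting to lines lying on $X$, the first two types contribute only the isolated $G$-fixed point $[L] \in F(X)$ and finitely many fixed points coming from the lines on the cubic surface $\Sigma$; the two-dimensional fixed component I am after therefore consists of $\phi$-invariant lines $\ell \subset X$ meeting both $L$ and $\Pi$.

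Let $\ell$ be such a line, meeting $\Pi$ at a point $p$, and consider the $\phi$-invariant plane $\Lambda_p := \langle L, p\rangle$. The residual decomposition $X \cap \Lambda_p = L + C_p$ exhibits $\ell$ as a component of the conic fiber $C_p = \pi^{-1}(p)$ of the conic bundle $\pi\colon \Bl_L X \to \Pi$. Choose coordinates $[x:y:z]$ on $\Lambda_p$ with $L = \{z=0\}$ and $p = [0:0:1]$; then $\phi|_{\Lambda_p}$ acts by $z \mapsto -z$, so any $\phi$-invariant conic on $\Lambda_p$ has the form $ax^2 + bxy + cy^2 + fz^2 = 0$. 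For such a conic to contain a line through $p$ (which is automatically $\phi$-invariant, being cut out by a form in $x,y$ alone) one must have $f = 0$, forcing $C_p$ to be the union of two lines meeting at $p$. In particular $p \in C_p \subset X$, so $p \in X \cap \Pi = \Sigma$, and $p$ lies in the discriminant of $\pi$.

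This associates to each such $\phi$-invariant line $\ell$ a pair $(p, \ell)$ with $p \in \Sigma$ and $\ell$ a component of the degenerate conic $C_p$, which is precisely the data parametrised by the discriminant double cover $\tau\colon S \to \Sigma$ (ramified along $\Sigma \cap Q$, where the two components of $C_p$ coincide). Conversely, the universal component-line over $S$ produces a flat family of lines in $X$ parametrised by $S$, hence a morphism $S \to F(X)$ which factors through $\Fix_G(F(X))$ and is injective on closed points. Since $\phi$ is a symplectic involution of the hyperk\"ahler fourfold $F(X)$, its fixed locus is smooth with two-dimensional Lagrangian components, so the image of $S$ is identified with a component of $\Fix_G(F(X))$. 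The main technical point is verifying that the set-theoretic construction is a morphism of schemes; this follows from exhibiting the universal component-line as a flat subscheme of $S \times X$ cut out from the universal conic over the discriminant divisor.
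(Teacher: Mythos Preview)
Your argument is correct and follows the same geometric idea as the paper's (very brief) proof, namely that $S$ parametrises lines meeting both fixed components $L$ and $\Sigma$ and hence $\phi$-invariant lines; you go further by classifying all $\phi$-invariant lines on $X$ and checking that $S$ exhausts an entire two-dimensional component of $\Fix_G(F(X))$, which the paper leaves implicit. One small correction: the fixed locus of a \emph{symplectic} involution on a hyperk\"ahler manifold is symplectic, not Lagrangian (Lagrangian fixed loci come from anti-symplectic involutions); this does not affect your argument, which only uses that the fixed locus is smooth and two-dimensional.
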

\begin{proof}
    The surface $S$ parametrises lines that intersect both the fixed line $L$ and the cubic surface $\Sigma.$ Thus each line parametrised by $S$ in invariant under the involution.
\end{proof}

\subsection{The geometry of the quotient}\label{subsec: Geometry of quotient}
Let $\widetilde{X}:=\Bl_{Fix_G(X)}X$ and $f\colon \tX\rightarrow X$ be the blow up with exceptional divisors $E_L$ and $E_\Sigma$ over $L$ and $\Sigma$ respectively. We still denote by $G$ the extended action on $\tX$. 
We let $$q\colon \widetilde{X}\rightarrow \widetilde{X}/G$$ be the quotient map, and let $D_L:=q(E_L), D_\Sigma:=q(E_\Sigma).$ 
Note that $\widetilde{X}/G$ is smooth. Inspired by \cite[Proposition 3.6]{casalainamartin2024}, we prove the following:

\begin{proposition}\label{prop: Z}
    Let $S\subset \bP^3_{[X_0:X_1:X_2:X_3]}\times \bP^1_{[X_4:X_5]}$ be defined by
    \[S:=V\left( g(X_0,X_1,X_2,X_3), X_0q_0(X_4,X_5)+X_1q_1(X_4,X_5)+X_2q_2(X_4,X_5)+X_3q_3(X_4,X_5)\right),\]
    where $g, q_i$ come from the equation \eqref{eqn: X in quadric form} for $X$.
    Then $Z:=\Bl_S(\bP^3\times \bP^1)$ is isomorphic to $\tX/G.$
\end{proposition}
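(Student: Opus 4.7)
The plan is to construct an explicit birational morphism $\bar\psi\colon \tX/G \to \bP^3\times\bP^1$, identify its exceptional locus as a $\bP^1$-bundle over $S$, and then invoke the Fujiki--Nakano contraction criterion to conclude that $\tX/G\cong\Bl_S(\bP^3\times\bP^1)$. The driving observation is that the rational map
\[
\psi\colon X\dashrightarrow \bP^3\times \bP^1,\qquad [x_0:\cdots:x_5]\mapsto \bigl([x_0{:}x_1{:}x_2{:}x_3],\,[x_4{:}x_5]\bigr)
\]
is automatically $G$-invariant, since $\phi$ fixes the first four coordinates and acts on $\bP^1_{[x_4:x_5]}$ via $[x_4{:}x_5]\mapsto[-x_4{:}-x_5]=[x_4{:}x_5]$. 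Its base locus on $X$ is exactly $L\cup\Sigma=\Fix_G(X)$, so on the blow-up $\tX$ the map extends to a genuine morphism $\tilde\psi\colon\tX\to\bP^3\times\bP^1$ which descends to $\bar\psi\colon\tX/G\to\bP^3\times\bP^1$.

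The next step is to verify that $\bar\psi$ is birational and to describe its fibers using the equations of $S$, namely $g$ and $Q:=\sum_iX_iq_i$. Parameterising a $\psi$-preimage of $([X_0{:}\cdots{:}X_3],[X_4{:}X_5])$ in $X$ as $[\lambda X_0{:}\cdots{:}\lambda X_3{:}\mu X_4{:}\mu X_5]$, the defining cubic of $X$ becomes $\lambda\bigl(\lambda^2 g(X)+\mu^2 Q(X)(X_4,X_5)\bigr)=0$, whose two solutions $(\lambda:\mu)$ outside the base locus are exchanged by $\phi\colon(\lambda:\mu)\mapsto(\lambda:-\mu)$; hence $\bar\psi$ has degree one. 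When $([X_0{:}\cdots{:}X_3],[X_4{:}X_5])\in S$ both $g$ and $Q$ vanish and the equation degenerates: the preimage in $X$ is an entire $\phi$-invariant line $\ell_s$ joining a point of $L$ to a point of $\Sigma$, whose strict transform $\widetilde{\ell_s}\subset\tX$ is a $\bP^1$ on which $\phi$ acts with the two endpoints (in $E_L$ and $E_\Sigma$) as fixed points, so $\widetilde{\ell_s}/G\cong\bP^1$ is a positive-dimensional fiber of $\bar\psi$. A short normal-bundle computation using $N_{\Sigma/X}\cong N_{\Pi/\bP^5}|_\Sigma\cong\mathcal{O}_\Sigma(1)^{\oplus 2}$ and $N_{L/X}=\ker\bigl(\mathcal{O}_L(1)^{\oplus 4}\xrightarrow{(q_0,\ldots,q_3)}\mathcal{O}_L(3)\bigr)$ then shows that $\tilde\psi$ maps $E_\Sigma$ isomorphically onto the $(3,0)$-divisor $\{g=0\}$ and $E_L$ isomorphically onto the $(1,2)$-divisor $\{Q=0\}$; in particular the exceptional divisors of $f$ are \emph{not} contracted by $\bar\psi$, so the only exceptional locus of $\bar\psi$ is the divisor $E'=\bigcup_{s\in S}\widetilde{\ell_s}/G$, a $\bP^1$-bundle over $S$.

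To finish, I would apply the Fujiki--Nakano contraction criterion to the proper birational morphism $\bar\psi$ of smooth projective fourfolds: its exceptional locus $E'$ is a smooth divisor forming a $\bP^1$-bundle over the smooth surface $S$, and a direct computation gives $N_{E'/(\tX/G)}|_F\cong\mathcal{O}_{\bP^1}(-1)$ on each fiber $F$, whence $\tX/G\cong\Bl_S(\bP^3\times\bP^1)$ and rationality of $X/G$ follows. The main obstacle I anticipate is the combined local analysis around $E_L$ and $E_\Sigma$: checking that blowing up the disjoint reduced centers $L$ and $\Sigma$ in $X$ really suffices to resolve $\psi$, that the family $\{\widetilde{\ell_s}\}_{s\in S}$ assembles into a smooth $\bP^1$-bundle $E'$ in $\tX/G$ (rather than developing singularities where distinct lines meet inside a common point of $E_L$ or $E_\Sigma$), and that the normal-bundle hypothesis of Fujiki--Nakano holds on each fiber of $E'\to S$.
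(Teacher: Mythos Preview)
Your approach is correct in outline and genuinely different from the paper's. The paper proceeds by a direct affine-chart computation: it embeds $\tX$ in $\Bl_{L\sqcup\Pi}\bP^5\subset\bP^5\times\bP^3\times\bP^1$, writes down the local equations of $\tX/G$ in a typical chart (using the $G$-invariant $a_4=x_4^2$ as a coordinate), writes down the local equations of $\Bl_S(\bP^3\times\bP^1)$ in a corresponding chart, and matches the two via an explicit coordinate change. No contraction criterion is invoked; the argument is purely a comparison of defining equations, modeled on the analogous computation for cubic threefolds.

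Your route is more geometric: you build the morphism $\bar\psi$ globally from the $G$-invariant bi-projection, analyze its fibers via the factorization $\lambda(\lambda^2g+\mu^2Q)$ of the cubic equation, and then appeal to Fujiki--Nakano. This has the advantage of explaining \emph{why} the blow-up center is $S$ (it is exactly where both $g$ and $Q$ vanish, forcing the whole $\phi$-invariant line $\ell_s$ into the fiber) and of making the identifications $D_\Sigma\cong\{g=0\}$, $D_L\cong\{Q=0\}$ transparent via the normal-bundle computations you outline. The cost is the cluster of technical verifications you already flag: smoothness of the exceptional divisor $E'$ (equivalently, that the family $\{\widetilde{\ell_s}\}$ really assembles into a $\bP^1$-bundle in $\tX/G$) and the normal-bundle restriction $\calO_{E'}(E')|_F\cong\calO_{\bP^1}(-1)$. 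These are not automatic from knowing only that all positive-dimensional fibers are $\bP^1$'s over a smooth $S$, so you would still need a local computation near $E'\cap D_L$ and $E'\cap D_\Sigma$---at which point your argument begins to converge with the paper's chart-by-chart check. In short, both proofs ultimately rest on the same local algebra; the paper front-loads it, while you defer it to the verification of the Fujiki--Nakano hypotheses.
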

\begin{remark}
    Notice that the image of $S$ under the projection $\bP^3\times \bP^1\rightarrow \bP^3$ is exactly $\Sigma,$ and the projection has degree 2. Thus the $S$ in \Cref{prop: Z} is exactly the $K3$ double cover of $\Sigma$ as in \Cref{lem: K3}, explaining the abuse of notation.
\end{remark}
    
\begin{proof}
    The proof is very similar to that of \cite[Proposition 3.6]{casalainamartin2024}, but in one dimension higher. We shall verify the isomorphism $Z\cong \tX/G$ on an open affine cover. 
    
    We consider $\tX\subset \Bl_{L\sqcup \Pi}\bP^5$, where $L\sqcup \Pi$ is the fixed locus for the $G$-action on the ambient $\bP^5$.
    Notice that $\Bl_{L\sqcup \Pi}\bP^5$ is a subvariety of $\bP^5\times \bP^3_{[y_0:y_1:y_2:y_3]}\times \bP^1_{[y_4:y_5]}$ with the following equations:
\begin{align*} 
&\hspace{2mm} x_4y_5=x_5y_4;\\
x_0y_1=x_1y_0;&\hspace{2mm} x_0y_2=x_2y_0; \hspace{2mm} x_0y_3=x_3y_0;\\
x_1y_2=x_2y_1;& \hspace{2mm} x_1y_3=x_3y_1; \hspace{2mm} x_2y_3=x_3y_2.
\end{align*}
By taking the strict transform of $X$ and then quotienting by $G$, we obtain a local description of $\tX/G$. For instance, let $x_0=y_0=y_4=1$: then $\tX/G$ has the following local equation in $\bA^5$ with coordinates $(y_1, y_2, y_3, y_5, a_4)$ where $a_4=x_4^2$:
\begin{equation*}
    g(1, y_1, y_2,y_3)+a_4\left(q_0(1,y_5)+y_1q_1(1,y_5)+y_2q_2(1,y_5)+y_3q_3(1,y_5)\right)=0.
\end{equation*}

On the other hand, $Z:=\Bl_S(\bP^3\times \bP^1)$ is given by the equation:
\[Y_0(X_0q_0(X_4,X_5)+X_1q_1(X_4,X_5)+X_2q_2(X_4,X_5)+X_3q_3(X_4,X_5)) + Y_1 g(X_0,X_1,X_2,X_3)=0\]
in $\bP^3\times \bP^1\times \bP^1$ with coordinates 
    $[X_0:X_1:X_2:X_3]\times[X_4:X_5]\times [Y_0: Y_1].$

Taking $X_0=X_4=Y_1=1,$ the affine variety is given by:
\[V(Y_0(q_0(1,X_5)+X_1q_1(1,X_5)+X_2q_2(1,X_5)+X_3q_3(1,X_5)) +  g(1,X_1,X_2,X_3)) \subset \bA^5.\] 
By mapping:
\begin{align*}
    a_4\mapsto Y_0\\
    y_5\mapsto X_5\\
    y_i\mapsto X_i
\end{align*} for $i=1,2,3$, we get an isomorphism between the corresponding affine subvarieties. Similarly, one can check that there are isomorphisms between other pairs of affine opens and that these local isomorphisms glue together.
\end{proof}
We will use this description of $Z:=\tX/G$ to give a semi-orthogonal decomposition of $D^b_G(\tX)$. Applying Orlov's blow up formula \cite[Proposition 11.18]{HuyFMBook}, we immediately see:

\begin{lemma}\label{semi-orth 1}
We have the following semi-orthogonal decomposition for $D^b(Z)$:
    \begin{align*}
    D^b(Z)=\langle D^b(S), \calO_Z, \calO_Z(1,0), \calO_Z(2,0), \calO_Z(3,0), \calO_Z(0,1), \calO_Z(1,1), \calO_Z(2,1), \calO_Z(3,1)\rangle,
\end{align*}
where $\calO_Z(i,j)$ is the pullback of $\calO(i,j)$ under the blow up $Z\rightarrow \bP^3\times \bP^1$.
\end{lemma}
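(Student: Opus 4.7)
The plan is to apply Orlov's blow-up formula to the projection $Z = \Bl_S(\bP^3\times\bP^1)\to \bP^3\times\bP^1$ and then refine the resulting copy of $D^b(\bP^3\times\bP^1)$ into an exceptional collection via Beilinson's theorem together with the Künneth-type construction of exceptional collections on products.

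First, I would verify the hypotheses of Orlov's blow-up formula \cite[Proposition 11.18]{HuyFMBook}: both $\bP^3\times\bP^1$ and the center $S$ must be smooth. Smoothness of $S$ follows from \Cref{lem: K3}, which identifies it (for $X$ general) with a component of the fixed locus of a symplectic involution on the smooth hyperkähler $F(X)$, and also from the fact that $Z\cong\tX/G$ in \Cref{prop: Z} is smooth. Next I would compute the codimension of $S$ in $\bP^3\times\bP^1$: by construction $S$ is the complete intersection of a $(3,0)$ divisor and a $(1,2)$ divisor, so $\mathrm{codim}_{\bP^3\times\bP^1}S=2$. Orlov's formula then yields exactly one extra copy of $D^b(S)$ and reads
\[
D^b(Z)=\bigl\langle D^b(S),\, f^*D^b(\bP^3\times\bP^1)\bigr\rangle,
\]
where $f\colon Z\to\bP^3\times\bP^1$ is the blow-up map and $D^b(S)$ is embedded via $F\mapsto j_*(p^*F)$ with $j\colon E\hookrightarrow Z$ the exceptional divisor and $p\colon E\to S$ the $\bP^1$-bundle projection.

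Second, I would combine Beilinson's exceptional collection $D^b(\bP^3)=\langle \calO,\calO(1),\calO(2),\calO(3)\rangle$ with $D^b(\bP^1)=\langle\calO,\calO(1)\rangle$ to obtain a full strong exceptional collection
\[
D^b(\bP^3\times\bP^1)=\bigl\langle \calO,\calO(1,0),\calO(2,0),\calO(3,0),\calO(0,1),\calO(1,1),\calO(2,1),\calO(3,1)\bigr\rangle,
\]
using the standard fact that the external tensor product of full exceptional collections on two smooth projective varieties is again a full exceptional collection on the product (with the lexicographic order indicated above working because pairs of line bundles on $\bP^3$ and $\bP^1$ have vanishing higher cohomology in the required direction). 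Pulling this back via the birational $f$ and substituting into the Orlov decomposition yields the claimed semi-orthogonal decomposition of $D^b(Z)$.

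The only real subtlety I anticipate is bookkeeping for the order of the factors: Orlov's formula can be stated with $f^*D^b(Y)$ on the left or the right (and with the $D^b(W)_i$ twists on the opposite side), and switching the order of semi-orthogonal blocks requires a mutation which may change the embedding functor for $D^b(S)$ but not the resulting subcategory. Similarly, the Künneth-type ordering of $\calO_Z(i,j)$ must be chosen so that all the required $\Hom$-vanishings hold; I would double-check this by a short Künneth computation confirming $\Hom^\bullet(\calO(i,1),\calO(i',0))=0$ for $i\ge i'$ and $\Hom^\bullet(\calO(i,\epsilon),\calO(i',\epsilon))=0$ for $i>i'$. Once the ordering is fixed, the argument is a direct concatenation of two standard decompositions and requires no additional input.
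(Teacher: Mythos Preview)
Your proposal is correct and follows exactly the paper's approach: the paper simply states that the lemma follows immediately from Orlov's blow-up formula \cite[Proposition 11.18]{HuyFMBook}, and your write-up spells out the details (smoothness and codimension-2 hypotheses, plus the Beilinson/Künneth decomposition of $D^b(\bP^3\times\bP^1)$) that the paper leaves implicit.
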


\subsection{Two decompositions for $D^b_G(\widetilde{X})$}\label{subsec: two decomps for equiv} 
We are now equipped to study the equivariant category $D^b_G(\tX)$. We immediately obtain the following semi-orthogonal decomposition:

\begin{lemma}\label{lem: first decomp}
    We have 
    \begin{align*}
        D^b_G(\tX)=\langle  \Ku_G(X), \calO_{\tX}, \calO_{\tX}\otimes \chi_1, \calO_{\tX}(1), \calO_{\tX}(1)\otimes \chi_1,\calO_{\tX}(2),\\
        \calO_{\tX}(2)\otimes \chi_1, D^b(\bP^1), D^b(\bP^1)\otimes \chi_1, D^b(\bP^1)\otimes\calO_{E_L}(1), \\D^b(\bP^1)\otimes\calO_{E_L}(1)\otimes \chi_1, D^b(\Sigma), D^b(\Sigma)\otimes \chi_1\rangle,
    \end{align*}
    where $\chi_1$ is the non-trivial character of $G$.
\end{lemma}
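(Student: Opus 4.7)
The strategy is to construct this decomposition by combining Orlov's blow-up formula, Kuznetsov's SOD of $D^b(X)$, and the equivariantization machinery of \Cref{subsec: Gequiv}.

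I begin by observing that, by \Cref{subsec:geometry of X}, the fixed loci $L\subset V(x_0,x_1,x_2,x_3)$ and $\Sigma\subset V(x_4,x_5)$ are disjoint in $X$, so $\tX$ can be obtained by successively blowing up these two components. Since $L\cong\bP^1$ has codimension $3$ in $X$ and $\Sigma$ has codimension $2$, two applications of Orlov's blow-up formula \cite[Prop.~11.18]{HuyFMBook}, followed by inserting the Kuznetsov decomposition $D^b(X)=\langle \Ku(X),\calO_X,\calO_X(1),\calO_X(2)\rangle$ into the $f^*D^b(X)$ piece, produce the seven-term SOD
\[
D^b(\tX)=\langle f^*\Ku(X),\calO_{\tX},\calO_{\tX}(1),\calO_{\tX}(2),D^b(\bP^1),D^b(\bP^1)\otimes\calO_{E_L}(1),D^b(\Sigma)\rangle.
\]
Each component is preserved by the extended $G$-action on $\tX$: $\Ku(X)$ and the line bundles $\calO_X(i)$ are preserved because $\phi$ is linear on $\bP^5$, and the blow-up pieces are preserved because $L$ and $\Sigma$ are $G$-invariant. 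Thus \Cref{thm: Ela11} yields an induced SOD of $D^b_G(\tX)$ consisting of the seven equivariantizations of these pieces.

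Next I analyze each equivariantization separately. The first piece is $\Ku_G(X)$ by definition. For the remaining six, the action of $G$ is trivial on the underlying subcategory: the line bundles $\calO_{\tX}(i)$ carry natural trivial linearizations, and both $D^b(L)=D^b(\bP^1)$ and $D^b(\Sigma)$ have trivial $G$-action since $L$ and $\Sigma$ are pointwise fixed by $G$. Applying \Cref{prop: KP16} then splits each of these six pieces into two completely orthogonal admissible subcategories indexed by the two characters $\chi_0,\chi_1$ of $G$, giving the remaining twelve pieces of the stated decomposition in the listed order.

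The main technical subtlety is the twisted piece $D^b(\bP^1)\otimes\calO_{E_L}(1)$: applying \Cref{prop: KP16} requires fixing a $G$-linearization on $\calO_{E_L}(1)$, and because $G$ acts by $-1$ on the normal bundle $N_{L/X}$ the natural choice of linearization picks up a sign. This only permutes the $\chi_0/\chi_1$ labeling within the splitting and does not change the pair of admissible subcategories produced, so the final decomposition matches the statement.
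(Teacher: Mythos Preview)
Your proof is correct and follows the same approach as the paper: apply Orlov's blow-up formula to obtain a $G$-stable SOD of $D^b(\tX)$, then use \Cref{thm: Ela11} and \Cref{prop: KP16} to pass to the equivariant category and split the trivially-acted-on pieces by characters. You supply considerably more detail than the paper's two-sentence argument, including the observation about the sign in the natural linearization of $\calO_{E_L}(1)$, which the paper leaves implicit.
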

\begin{proof}
     We first use Orlov's blow up formula to see that $D^b(\tX)=\langle D^b(X), D^b(L), D^b(L)\otimes \calO_{E_L}(1),D^b(\Sigma)\rangle$. Identifying $L\cong \bP^1$ and taking the $G$-equivariant category as in \Cref{thm: Ela11} gives the result.
\end{proof}

We wish to compare the decomposition above with another semi-orthogonal decomposition of $D^b_G(\tX)$. Recall that $D^b_G(\tX):=D^b([\tX/G]),$ where $[\tX/G]$ is the quotient stack. We obtain a second decomposition by considering $[\tX/G]$ as a root stack, as in \Cref{subsec: root stack}. 

\begin{proposition}\label{prop: second decomp}
    We have:
    \begin{align*}
        D^b_G(\tX)=\langle D^b(Z), D^b(D_L), D^b(D_\Sigma)\rangle. 
    \end{align*}
\end{proposition}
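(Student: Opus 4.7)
The strategy is to realize the quotient stack $[\tX/G]$ as a square root stack and then invoke \Cref{thm: BD thm}. The first task is to pin down $\Fix_G(\tX)$. Since the symplectic involution $\phi$ fixes $L \sqcup \Sigma \subset X$, a direct computation in affine charts around $L$ and $\Sigma$ (using the explicit coordinate form of $\phi$ from \Cref{subsec:geometry of X}) shows that the induced action on the normal directions to each component is multiplication by $-1$. A standard local computation on the blowup -- in coordinates $(y, t, w)$ with $\phi$ acting by $t \mapsto -t$ and trivially on $(y, w)$ -- then shows that the lifted involution fixes exactly the exceptional divisor. Thus $\Fix_G(\tX) = E_L \sqcup E_\Sigma$ is a smooth disjoint divisor, and $G$ acts as a pseudo-reflection along it.

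Consequently, $Z = \tX/G$ is smooth and the images $D_L = q(E_L)$, $D_\Sigma = q(E_\Sigma)$ form a smooth disjoint divisor $D = D_L \sqcup D_\Sigma$ on $Z$. The standard identification of a $\mathbb{Z}/2\mathbb{Z}$-quotient stack along its fixed divisor with a square root stack, recalled in \Cref{subsec: root stack}, yields
\[[\tX/G] \cong \sqrt[2]{Z/D}.\]
Applying \Cref{thm: BD thm} with $n = 2$ then gives
\[D^b_G(\tX) = D^b([\tX/G]) = \langle D^b(Z),\, D^b(D)\rangle,\]
since no twisted copies of $D^b(D)$ appear when $n = 2$ (the twists $D^b(D) \otimes \calO(i)$ are indexed by $1 \le i \le n-2$). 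Finally, because $D_L$ and $D_\Sigma$ are disjoint closed subvarieties of $Z$, $D^b(D)$ decomposes as the completely orthogonal pair $\langle D^b(D_L), D^b(D_\Sigma)\rangle$, producing the claimed decomposition.

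The only non-formal step is the identification $[\tX/G] \cong \sqrt[2]{Z/D}$, which reduces to the local verification that the lifted involution on $\tX$ fixes precisely the exceptional divisor. Everything else follows formally from the root stack semi-orthogonal decomposition together with the disjointness of $L$ and $\Sigma$ in $X$. I expect the exposition to be essentially a sequence of bookkeeping steps, once the geometric input from \Cref{prop: Z} and the coordinate form of $\phi$ have been recorded.
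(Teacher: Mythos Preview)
Your proposal is correct and follows essentially the same route as the paper: identify $[\tX/G]$ with the square root stack $\sqrt[2]{Z/D}$ via the discussion in \Cref{subsec: root stack}, apply \Cref{thm: BD thm} with $n=2$, and then split $D^b(D)$ using the disjointness of $D_L$ and $D_\Sigma$. The paper's proof is a single sentence invoking exactly these three ingredients; your version simply spells out the local verification that $\Fix_G(\tX)=E_L\sqcup E_\Sigma$, which the paper takes as already established.
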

\begin{proof}This follows immediately from the semi-orthogonal decomposition of Theorem \ref{thm: BD thm} together with the identification $\tilde{X}/G\cong Z$ of Proposition \ref{prop: Z} and the fact that $D_L$ and $D_\Sigma$ are disjoint. 
\end{proof}

\subsection{Hochschild homology of $\Ku_G(X)$}\label{subsec: study HH}
Finally, we are able to prove \Cref{prop: indecomp}, that the equivariant category $\Ku_G(X)$ is a $K3$ category.

\begin{proof}[Proof of \Cref{prop: indecomp}]
    Since $\Ku_G(X)$ is a 2-Calabi--Yau category, it suffices to prove that $\Ku_G(X)$ has the same Hochschild homology as $D^b(S)$, where $S$ is a $K3$  surface. 
    We will use the two semi-orthogonal decompositions of $D^b_G(\tX)$ from Section~\ref{subsec: two decomps for equiv} to compute $\HH_{\bullet}(\Ku_G(X)).$ 

    First, using \Cref{prop: second decomp} and \Cref{semi-orth 1} we see that
    $$\HH_{i}(D^b_G(\tX))=\begin{cases}
        \HH_{-2}(D^b(S))=k, & i=-2\\
        \HH_{0}(D^b(S))\oplus k^{32}, & i=0\\
        \HH_{2}(D^b(S))=k, & i=2.
    \end{cases}$$
    Indeed, this follows since Hochschild homology is additive (\Cref{prop: HH additive}) and since both $D_L$ and $D_\Sigma$ are projective bundles over $\bP^1, \Sigma$, respectively. 
    Both $D^b(\bP^1)$, $D^b(\Sigma)$ have full exceptional collections, thus it follows that $D^b(D_L), D^b(D_{\Sigma})$ also admit full exceptional collections -- one can compute them explicitly using \cite[Corollary 8.36]{HuyFMBook}. By \Cref{lem: HH of except}, exceptional objects contribute homology in only the zeroth degree, and one can compute the total number of exceptional objects in the decomposition in \Cref{prop: second decomp}.

    Secondly, we use \Cref{lem: first decomp}, along with the observation that the computation above implies the vanishing of all odd-degree Hochschild homology of $D^b_G(\tX)$, to see that $$\HH_{i}(D^b_G(\tX))=\begin{cases}\HH_{-2}(\Ku_G(X)), & i=-2,\\
    \HH_0(\Ku_G(X)) \oplus k^{32}, & i=0\\
    \HH_{2}(\Ku_G(X)), & i=2.
    \end{cases}$$
    Again, this follows since both $D^b(\bP^1), D^b(\Sigma)$ have full exceptional collections. As above, one checks that the full semi-orthogonal decomposition of \Cref{lem: first decomp} has 32 exceptional objects.
    
    Comparing homology then yields that $\HH_\bullet(\Ku_G(X))=\HH_\bullet(D^b(S)).$
    \end{proof}

\section{The equivalence}\label{sec:equivalence}

In this section, we prove the the equivalence of \Cref{main theorem}:
$$\Ku_G(X)\simeq D^b(S),$$
where $S\subset F(X)$ is the $K3$ component of the fixed locus of $G$ acting on $F(X)$.

First, we define a functor $\Phi\colon D^b(S)\rightarrow D^b(X),$ by specifying a Fourier-Mukai kernel in $D^b(S\times X).$
Recall from \Cref{subsec:geometry of X} that a point $s\in S$ determines a line $\ell_s\subset X$ that is invariant under the involution $\phi$. 
Hence each $s\in S$ determines a sheaf $I_{\ell_s}$, the ideal sheaf of the line $\ell_s$. The sheaf $I_{\ell_s}(1)$ sits in the exact sequence:
\begin{equation}\label{SES}
0\rightarrow F_{\ell_s}\rightarrow \calO_X^{\oplus 4}\rightarrow I_{\ell_{s}}(1)\rightarrow 0,
\end{equation}
where $\calO_X^{\oplus 4}=H^0(X, I_{\ell_s}(1))\otimes \calO_X\rightarrow I_{\ell_s}(1)$ is the evaluation map, and $F_{\ell_s}$ is the kernel. 

\begin{lemma}\label{lem:prop of F_s}
    The sheaf $F_{\ell_s}$ is a reflexive sheaf of rank 3 on $X$, invariant under the action of $G$ (i.e. $F_{\ell_s}\cong \phi^*F_{\ell_s})$, and $F_{\ell_s}\in \Ku(X).$ Further, for $s\neq t$ we have $F_{\ell_s}\not\cong F_{\ell_t}.$
\end{lemma}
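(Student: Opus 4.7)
The plan is to handle each claim using the defining sequence~\eqref{SES}, with the only genuinely nontrivial point being the non-isomorphism $F_{\ell_s}\not\cong F_{\ell_t}$, which I would prove via an $\mathcal{E}xt$-sheaf computation that recovers the line $\ell_s$ from $F_{\ell_s}$.

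The rank is $4-1=3$, and torsion-freeness is immediate since $F_{\ell_s}\subset \calO_X^{\oplus 4}$. For reflexivity, I would observe that $I_{\ell_s}(1)|_{X\setminus \ell_s}\cong \calO_X(1)|_{X\setminus \ell_s}$, so $F_{\ell_s}$ is locally free off $\ell_s$; since the non-locally-free locus is then contained in the codimension-$3$ subscheme $\ell_s\subset X$, reflexivity follows from the standard characterization of reflexive sheaves on a smooth variety. For $G$-equivariance, the line $\ell_s$ is $\phi$-invariant (by \Cref{lem: K3}) and $\phi^*\calO_X(1)\cong \calO_X(1)$ since $\phi$ is linear, so pulling back~\eqref{SES} by $\phi^*$ presents $\phi^*F_{\ell_s}$ as the kernel of \emph{some} surjection $\calO_X^{\oplus 4}\twoheadrightarrow I_{\ell_s}(1)$; since $H^0(I_{\ell_s}(1))=k^4$, any two such surjections differ by an element of $GL_4(k)=\Aut(\calO_X^{\oplus 4})$, and their kernels are therefore isomorphic.

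To verify $F_{\ell_s}\in \Ku(X)$, I would show $H^\bullet(X, F_{\ell_s}(-i))=0$ for $i=0,1,2$ by twisting~\eqref{SES} by $\calO_X(-i)$ and taking the long exact sequence. The groups $H^\bullet(X,\calO_X(-i))$ vanish for $i=1,2$ by Kodaira and contribute only $k$ in degree $0$ for $i=0$, while $H^\bullet(X, I_{\ell_s}(1-i))$ is read off from $0\to I_{\ell_s}(1-i)\to \calO_X(1-i)\to \calO_{\ell_s}(1-i)\to 0$ using $\calO_{\ell_s}(j)\cong \calO_{\bP^1}(j)$. For $i=0$ the evaluation $H^0(\calO_X^{\oplus 4})\to H^0(I_{\ell_s}(1))$ is an isomorphism by construction, cancelling the only surviving term; for $i=1,2$ all relevant groups vanish outright.

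For the non-isomorphism, the plan is to identify $\ell_s$ as the support of $\mathcal{E}xt^1(F_{\ell_s},\calO_X)$. Dualising~\eqref{SES} and using $\mathcal{E}xt^{>0}(\calO_X^{\oplus 4},\calO_X)=0$ gives $\mathcal{E}xt^1(F_{\ell_s},\calO_X)\cong \mathcal{E}xt^2(I_{\ell_s}(1),\calO_X)$. Applying $\mathcal{RH}om(-,\calO_X)$ to $0\to I_{\ell_s}(1)\to \calO_X(1)\to \calO_{\ell_s}(1)\to 0$, together with the fact that $\ell_s\subset X$ is a smooth codimension-$3$ local complete intersection (so $\mathcal{E}xt^j(\calO_{\ell_s},\calO_X)=0$ for $j\ne 3$ and $\mathcal{E}xt^3(\calO_{\ell_s},\calO_X)\cong \det N_{\ell_s/X}$ is a line bundle on $\ell_s$), then identifies this with a nonzero sheaf whose support equals $\ell_s$. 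Since the map $s\mapsto \ell_s$ is the inclusion $S\hookrightarrow F(X)$ and hence injective, an isomorphism $F_{\ell_s}\cong F_{\ell_t}$ forces $s=t$. I expect this last step to be the main obstacle: the first three parts reduce to standard manipulations of~\eqref{SES} and cohomological vanishing on the cubic fourfold, whereas distinguishing $F_{\ell_s}$ from $F_{\ell_t}$ requires extracting a piece of canonical homological data that genuinely detects the underlying line.
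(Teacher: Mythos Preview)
Your argument is largely correct and considerably more self-contained than the paper's, which cites \cite[Lemma~5.1, Proposition~5.2]{KM09} for reflexivity, rank, membership in $\Ku(X)$, and the non-isomorphism, and argues only the $G$-invariance directly (via exactness of $\phi^*$, essentially the same idea as your $\GL_4$ argument). Your $\mathcal{E}xt$-sheaf computation recovering $\ell_s$ as the support of $\mathcal{E}xt^1(F_{\ell_s},\calO_X)$ is a clean alternative to the moduli-theoretic injectivity of $\ell\mapsto F_\ell$ established in \cite{KM09}; it avoids importing any stability or moduli machinery, at the cost of being specific to this construction rather than yielding the stronger statement that $F(X)$ embeds as a component of a moduli space.

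There is one genuine gap: the reflexivity step. Torsion-freeness together with local freeness off a codimension-$3$ locus does \emph{not} imply reflexivity; the ideal sheaf of a point $p\in\bP^3$ is torsion-free and locally free away from $p$, yet its double dual is $\calO_{\bP^3}$. The implication you seem to be invoking runs the other way: the non-locally-free locus of a reflexive sheaf on a smooth variety has codimension $\geq 3$. The fix is immediate from the sequence you already have: on a smooth variety a coherent sheaf is reflexive if and only if it sits in an exact sequence $0\to F\to E\to T\to 0$ with $E$ locally free and $T$ torsion-free, and here $F_{\ell_s}=\ker\bigl(\calO_X^{\oplus 4}\twoheadrightarrow I_{\ell_s}(1)\bigr)$ with $I_{\ell_s}(1)$ torsion-free.
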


\begin{proof}
    The line $\ell_s\subset X$ is invariant under the action of $G$, and hence $I_{\ell_s}\cong \phi^*I_{\ell_s}$. Since $\phi$ is a finite map, the pullback $\phi^*$ is exact and it follows that $F_{\ell_s}\cong \phi^*F_{\ell_s}$. The remaining claims follows from combining \cite[Lemma 5.1 and Proposition 5.2]{KM09}.
\end{proof}

Let $\calI$ be the restriction of the ideal sheaf of the universal line on $F(X)\times X$ to $S\times X$, and let $\calF$ be the universal sheaf on $S\times X$ given as the kernel of $\calO_{S\times X}^{\oplus 4}\twoheadrightarrow \calI(1).$ 
Thus $\calF|_{\{s\}\times X}\cong F_{\ell_s}$ for all $s\in S.$
We define the functor $\Phi\colon D^b(S)\rightarrow D^b(X)$ to be the Fourier-Mukai transform with kernel $\calF\in D^b(S\times X).$

\begin{proposition}
    The functor $\Phi$ takes values in $\Ku(X).$ Moreover, if we equip $D^b(S)$ with a trivial $G$-action, the functor $\Phi$ is a $G$-functor.
\end{proposition}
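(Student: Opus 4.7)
The plan is to verify the two assertions separately, using as the central input the computation $\Phi(k(s)) = F_{\ell_s}$ on skyscraper sheaves, together with the properties of $F_{\ell_s}$ collected in \Cref{lem:prop of F_s}. Throughout, let $p_S, p_X$ denote the projections from $S \times X$ to $S$ and $X$.

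For the first assertion, I would argue via cohomology and base change. By \Cref{lem:prop of F_s}, $F_{\ell_s} \in \Ku(X)$ for every $s \in S$, so $R\Gamma(X, F_{\ell_s}(-i)) = R\Hom(\calO_X(i), F_{\ell_s}) = 0$ for $i = 0, 1, 2$. The kernel $\calF$ is $S$-flat: the universal line is $F(X)$-flat (each fiber is a line), so the restricted ideal sheaf $\calI$ is $S$-flat, and $\calF = \ker(\calO_{S\times X}^{\oplus 4} \twoheadrightarrow \calI(1))$ is then flat since the quotient is. Cohomology and base change therefore yields $Rp_{S*}(\calF \otimes p_X^*\calO_X(-i)) = 0$ on $S$ for $i = 0, 1, 2$. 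Consequently, for every $E \in D^b(S)$, the projection formula gives
\[R\Hom_X(\calO_X(i), \Phi(E)) = R\Gamma(S, E \otimes Rp_{S*}(\calF \otimes p_X^*\calO_X(-i))) = 0,\]
so $\Phi(E) \in \Ku(X)$.

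For the second assertion, with $D^b(S)$ carrying the trivial $G$-action, a $G$-functor structure on $\Phi$ amounts to a natural isomorphism $\eta : \Phi \Rightarrow \phi^* \circ \Phi$ satisfying the coherence $\phi^*\eta \circ \eta = \mathrm{id}_\Phi$. Under the Fourier--Mukai dictionary, since $\phi^* \circ \Phi_\calF = \Phi_{(\mathrm{id}_S \times \phi)^*\calF}$, this reduces to producing an isomorphism $\calF \xrightarrow{\sim} (\mathrm{id}_S \times \phi)^*\calF$ with the analogous cocycle property, i.e.\ a $G$-equivariant structure on $\calF$ for the action $(\mathrm{id}_S, \phi)$ on $S \times X$. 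I would construct this by equivariantly lifting each term of the short exact sequence \eqref{SES} pulled back to $S \times X$. The universal line $\mathcal{L} \subset F(X) \times X$ is invariant under the $(\phi_*, \phi)$-action, so its ideal sheaf is equivariant; restricting to $S \times X$ and using $\phi_*|_S = \mathrm{id}_S$ (since $S$ lies in $\Fix_G(F(X))$) equips $\calI$ with the desired linearization. The sheaf $\calO_X(1)$ is $\phi$-equivariant since $\phi$ extends to a linear involution of $\bP^5$, and the rank-four term is best viewed as $p_S^*\calE$ for $\calE := p_{S*}\calI(1)$, which inherits its linearization canonically from that of $\calI(1)$. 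The evaluation map is then $G$-equivariant, so its kernel $\calF$ acquires a $G$-equivariant structure.

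I expect the subtlest point to be the cocycle condition: one must arrange that the isomorphism $\eta$ actually squares to the identity rather than merely existing as an abstract isomorphism. For $G = \bZ/2\bZ$ this reduces to a single sign ambiguity, fixed by the canonical linearization on $\calE = p_{S*}\calI(1)$; any residual obstruction lies in $H^2(\bZ/2\bZ, k^\times) = 0$. Once $\calF$ is promoted to a genuine $G$-equivariant object of $D^b(S \times X)$, the standard Fourier--Mukai formalism upgrades $\Phi$ to a $G$-functor, providing the factorization through $\Ku_G(X)$ used in the remainder of the argument.
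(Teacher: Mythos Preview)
Your proof is correct and follows essentially the same strategy as the paper. For the first assertion the paper argues via the left adjoint $\Phi^L$, showing $\Phi^L(\calO_X(j))=0$ by testing against skyscrapers, whereas you compute $R\Hom(\calO_X(i),\Phi(E))$ directly via base change and the projection formula; these are two packagings of the same vanishing $R\Gamma(X,F_{\ell_s}(-i))=0$. For the second assertion the paper simply asserts a ``natural morphism'' $\phi^*\calF\to\calF$ and checks it is an isomorphism stalkwise using \Cref{lem:prop of F_s}, while you make the construction explicit by linearising each term of the defining short exact sequence and then address the cocycle condition; your version is more careful, but the underlying content is the same.
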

\begin{proof}
    Let $\Phi^L$ be the left adjoint to $\Phi$, which exists by \cite[Proposition 5.9]{HuyFMBook}.
    Note that $\Phi(\calO_s)=F_{\ell_s}\in \Ku(X)$, by \Cref{lem:prop of F_s}. 
    Hence we see that
    $$\Ext^i(\Phi^L(\calO_X(j)), \calO_s)\cong \Ext^i(\calO_X(j), F_{\ell_s}) = 0$$ for $-1\leq j \leq 1$ and all $i$. 
    This means $\Phi^L(\calO_X(j))=0$ for $-1\leq j\leq 1$, and so the image of $\Phi$ lies in $\Ku(X).$

    To show that $\Phi$ is a $G$-functor, since $G=\bZ/2\bZ$ it is enough to show that there is a natural isomorphism $\Phi \xrightarrow{\sim} \phi^*\Phi$, which will follow from showing that the kernel $\calF$ is invariant under the diagonal action of $G$ on $S\times X$. 
    We claim that the natural morphism $\phi^*\calF \to \calF$ given by the pullback is an isomorphism. 
    Considering that $\calF|_{\{s\}\times X} \cong F_{\ell_s}$ for all $s\in S$, we have the following commutative diagram for any $(s,x)\in S\times X$:
    \[
    \xymatrix{
    \calF_{(s,\phi(x))}=(\phi^*\calF)_{(s,x)} \ar[r] \ar[d]_[@!-90]{\sim} & \calF_{(s,x)} \ar[d]^[@!-90]{\sim} \\
    (F_{\ell_s})_{\phi(x)}=(\phi^*F_{\ell_s})_x \ar[r]^-{\sim} & (F_{\ell_s})_x
    }
    \]
    where the bottom arrow in the diagram is also an isomorphism by \Cref{lem:prop of F_s}.
\end{proof}

By \cite[Prop.~3.1]{beckmann2020} and the fact that $\Phi$ is a $G$-functor, there exists a unique functor $\Phi_G\colon D^b(S) \to \Ku_G(X)$ through which $\Phi$ factors:
\[
\xymatrix{
 & \Ku_G(X) \ar[d] \\
D^b(S) \ar[r]_{\Phi} \ar@{-->}[ru]^{\Phi_G} & \Ku(X), 
}
\]
where the map $\Ku_G(X)\rightarrow \Ku(X)$ forgets the $G$-linearisation.
We will show that $\Phi_G$ is an equivalence of categories, thereby proving \Cref{main theorem}. 

We wish to use the criteria of Bondal and Orlov \cite[Proposition 7.1]{HuyFMBook} to prove that $\Phi_G$ is fully faithful. In order to do so, we need to analyze some extension groups. Note that for $s,t\in S$, we have $\Ext^i_{\Ku_G(X)}(F_{\ell_s}, F_{\ell_t})\cong(\Ext_{\Ku(X)}^i(F_{\ell_s}, F_{\ell_t}))^G,$ where $G$ acts on $\Ext_{\Ku(X)}^i(F_{\ell_s}, F_{\ell_t})$ as in \cite[Section 3.1]{beckmann2020}. 

\begin{lemma}\label{lem:extcalc}
For all $s\in S$, we have
\[\dim \Ext^i_{\Ku_G(X)}(F_{\ell_s}, F_{\ell_s})= \begin{cases}
    1 & \text{if } i=0, 2\\
    2 & \text{if } i=1,\\
    0 & \text{ otherwise.}
\end{cases}\]
\end{lemma}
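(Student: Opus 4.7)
The plan is to reduce the equivariant Ext computation to the (known) non-equivariant one in $\Ku(X)$, and then take $G$-invariants in each degree. The starting point is the isomorphism
\[
\Ext^i_{\Ku_G(X)}(F_{\ell_s}, F_{\ell_s}) \;\cong\; \Ext^i_{\Ku(X)}(F_{\ell_s}, F_{\ell_s})^G,
\]
which extends the Hom formula recorded in Section~\ref{subsec: Gequiv}; the $G$-action on the right is determined by the canonical $\phi^*$-linearisation of $F_{\ell_s}$ produced in \Cref{lem:prop of F_s} (compare \cite[Section~3.1]{beckmann2020}).

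First I would compute the non-equivariant Exts. By \cite[Theorem~1.1]{LPZ}, the Fano variety $F(X)$ is a smooth four-dimensional moduli space of stable objects in the 2-Calabi--Yau category $\Ku(X)$, and $F_{\ell_s}$ is such a stable object, corresponding to the point $[\ell_s]\in F(X)$. Simplicity of $F_{\ell_s}$ yields $\hom=1$; Serre duality in the 2-CY category $\Ku(X)$ gives $\ext^2=1$; the moduli identification gives $\ext^1 = \dim T_{[\ell_s]} F(X) = 4$; and all higher Exts vanish.

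Next I would analyse the $G$-action degree by degree. In degree $0$, the action is trivial because $\End(F_{\ell_s}) = k$ and the identity is $G$-fixed (a direct check using $\phi^*(\lambda\cdot \mathrm{id}) = \lambda\cdot \mathrm{id}$). In degree $2$, Serre duality in $\Ku(X)$ is $G$-equivariant, so it identifies $\Ext^2(F_{\ell_s}, F_{\ell_s})$ with $\Ext^0(F_{\ell_s}, F_{\ell_s})^\vee$, again with trivial action. In degree $1$, I would use the $G$-equivariant identification $\Ext^1_{\Ku(X)}(F_{\ell_s}, F_{\ell_s}) \cong T_{[\ell_s]} F(X)$. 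Since $[\ell_s]$ lies on the smooth surface component $S$ of $F(X)^G$ (\Cref{lem: K3}), the $+1$-eigenspace of $\phi^*$ on $T_{[\ell_s]} F(X)$ contains $T_{[\ell_s]} S$. Because $\phi$ is symplectic on the hyperk\"ahler fourfold $F(X)$, the $\pm 1$-eigenspaces of $\phi^*$ on the tangent space are symplectic complements of equal dimension; since $S$ is a surface, this forces $T_{[\ell_s]} S$ to be exactly the $+1$-eigenspace, so $(\Ext^1)^G$ is two-dimensional.

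The main obstacle in this plan is verifying the $G$-equivariance of the tangent-space identification in degree $1$: one must check that the linearisation on $F_{\ell_s}$ induced by $\phi^*\ell_s = \ell_s$ matches the one coming from the $G$-action on the moduli space $F(X)$ at the fixed point $[\ell_s]$. I expect this to follow from the observation that the universal sheaf $\calF$ on $S\times X$ admits a $G$-equivariant structure for the diagonal $G$-action (as exploited above in showing that $\Phi$ is a $G$-functor), so the deformation-theoretic interpretation of $T_{[\ell_s]} F(X)$ is canonically $G$-equivariant.
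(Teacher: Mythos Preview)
Your proposal is correct and follows essentially the same approach as the paper: reduce to $G$-invariants of the non-equivariant Ext groups, handle degrees $0$ and $2$ via simplicity and Serre duality, and identify $\Ext^1$ with the tangent space of the moduli space at a fixed point. The paper cites \cite{KM09} rather than \cite{LPZ} for the non-equivariant computation and works directly with the moduli space $F'(X)$ of stable \emph{sheaves}, which makes the $G$-equivariance of the tangent-space identification (your stated obstacle) immediate, and it invokes the standard equality $T_{[\ell_s]} S = (T_{[\ell_s]} F(X))^G$ for smooth fixed loci rather than your symplectic-complement argument.
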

\begin{proof}
First, let $\ell\subset X$ be any line (not necessarily parametrised by $S$). The following dimensions were computed in \cite[Section 5]{KM09}:
\[\dim \Ext^i_{\Ku(X)}(F_{\ell}, F_{\ell})= \begin{cases}
    1 & \text{if } i=0, 2\\
    4 & \text{if } i=1,\\
    0 & \text{ otherwise.}
\end{cases}\]

Now we restrict to lines $\ell_s\subset X$ for $s\in S$. By \Cref{lem:prop of F_s}, the sheaf $F_{\ell_s}\cong \phi^*F_{\ell_s}$ and is naturally $G$-linearised, defining an object in $\Ku_G(X).$ Note that any $f\in \Hom(F_{\ell_s}, F_{\ell_s})$ is automatically $G$-invariant, hence $\dim\Hom_{\Ku_G(X)}(F_{\ell_s},F_{\ell_s})=1.$ Since $\Ku_G(X)$ is a 2-Calabi--Yau category, we also have $\dim\Ext^2_{\Ku_G(X)}(F_{\ell_s},F_{\ell_s})=1.$

It remains to compute $\Ext^1_{\Ku_G(X)}(F_{\ell_s},F_{\ell_s})= (\Ext^1_{\Ku(X)}(F_{\ell_s},F_{\ell_s}))^G.$ Let $F'(X)$ be the moduli space of stable sheaves on $X$ containing the sheaves $F_{\ell}$ for any $\ell\subset X$. In \cite[Proposition 5.5]{KM09}, the authors prove that the map $F(X)\rightarrow F'(X)$ sending $\ell\mapsto F_\ell$ is an isomorphism of $F(X)$ on a connected component of $F'(X)$. We identify $F(X)$ with its image. By \cite[Corollary~4.5.2]{HuybrechtsLehn}, $\Ext^1(F_\ell, F_\ell)$ is isomorphic the tangent space to $F'(X)$ at the point $F_\ell$. The group $G$ acts on $F(X)$, with a 2-dimensional smooth fixed locus $S\subset F(X)$, and the image of $S$ is exactly the sheaves $F_{\ell_s}$. Since the tangent space of the fixed locus at a point is isomorphic to the $G$-invariant tangent space of $F(X)$, i.e. $T_{F_{\ell_s}}S = (T_{F_{\ell_s}}F(X))^G$, we see that 
\[2= \dim S = (\Ext^1_{\Ku_G(X)}(F_{\ell_s},F_{\ell_s}))^G = \Ext^1_{\Ku_G(X)}(F_{\ell_s},F_{\ell_s}),\]
as desired.
\end{proof}

\begin{proposition}\label{prop:fullyfaithful}
    The functor $\Phi_G \colon D^b(S)\rightarrow \Ku_G(X)$ is fully faithful.    
\end{proposition}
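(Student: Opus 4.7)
The plan is to apply the Bondal--Orlov criterion \cite[Proposition 7.1]{HuyFMBook} to the Fourier--Mukai functor $\Phi_G$. Since $\Phi_G(\calO_s)\cong F_{\ell_s}$ together with its canonical $G$-linearisation (provided by \Cref{lem:prop of F_s}), it suffices to verify: (a) for every $s\in S$, $\dim\Hom_{\Ku_G(X)}(F_{\ell_s},F_{\ell_s})=1$ and $\Ext^i_{\Ku_G(X)}(F_{\ell_s},F_{\ell_s})=0$ for $i\notin[0,2]$; and (b) for distinct $s,t\in S$, $\Ext^i_{\Ku_G(X)}(F_{\ell_s},F_{\ell_t})=0$ for all $i$.

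Condition (a) is exactly \Cref{lem:extcalc}. For condition (b), the vanishings in degrees $i\notin[0,2]$ are automatic from the $2$-Calabi--Yau property of $\Ku_G(X)$. In degree $0$, I would invoke \cite[Proposition 5.5]{KM09}: for $s\neq t$ the sheaves $F_{\ell_s}$ and $F_{\ell_t}$ are non-isomorphic stable objects of $\Ku(X)$, so $\Hom_{\Ku(X)}(F_{\ell_s},F_{\ell_t})=0$, and taking $G$-invariants yields the equivariant vanishing. In degree $2$, Serre duality in the $2$-Calabi--Yau category $\Ku_G(X)$ identifies $\Ext^2_{\Ku_G(X)}(F_{\ell_s},F_{\ell_t})$ with $\Hom_{\Ku_G(X)}(F_{\ell_t},F_{\ell_s})^*$, which vanishes by the previous case.

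The main obstacle is the vanishing $\Ext^1_{\Ku_G(X)}(F_{\ell_s},F_{\ell_t})=0$ for $s\neq t$. The plan is to leverage \Cref{prop: indecomp}: since $\Ku_G(X)$ is a K3 category, its Euler pairing factors through the Mukai pairing on the numerical lattice and in particular depends only on the numerical classes of the inputs. Because $\calF$ defines a flat family of objects of $\Ku_G(X)$ parametrised by the connected surface $S$, the numerical class of $F_{\ell_s}$ is independent of $s$. Hence
\[
\chi_{\Ku_G(X)}(F_{\ell_s},F_{\ell_t}) \;=\; \chi_{\Ku_G(X)}(F_{\ell_s},F_{\ell_s}) \;=\; 1-2+1 \;=\; 0
\]
for all $s,t\in S$, where the middle equality uses \Cref{lem:extcalc}. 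Combined with the vanishing of $\Hom$ and $\Ext^2$ for $s\neq t$ already established, this forces $\Ext^1_{\Ku_G(X)}(F_{\ell_s},F_{\ell_t})=0$, completing the verification of the Bondal--Orlov hypotheses.
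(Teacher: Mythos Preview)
Your argument is correct and follows essentially the same route as the paper: Bondal--Orlov, \Cref{lem:extcalc} for the diagonal, Serre duality for $\Ext^2$, and the Euler-characteristic constancy argument for $\Ext^1$. The only minor difference is in the $\Hom$-vanishing for $s\neq t$: you invoke stability of the $F_{\ell_s}$ from \cite[Proposition 5.5]{KM09}, whereas the paper applies $\Hom(-,F_{\ell_t})$ to the defining short exact sequence \eqref{SES} and uses $\Hom(\calO_X,F_{\ell_t})=0$; both are valid and equally short.
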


\begin{proof}
    We will use the criteria of Bondal and Orlov \cite[Proposition~7.1]{HuyFMBook}. In particular, we need to show:
        $$\dim \Ext_{\Ku_G(X)}^i(\Phi_G(\calO_s), \Phi_G(\calO_t))=\begin{cases}
    1 & \text{if } s=t, i=0\\
    0 & \text{if } s\neq t, \text{ or } i<0 \text{ or } i>2.
\end{cases}$$
    This is equivalent to showing:
    $$\dim \Ext_{\Ku_G(X)}^i(F_{\ell_s}, F_{\ell_t})=\begin{cases}
    1 & \text{if } s=t, i=0\\
    0 & \text{if } s\neq t, \text{ or } i<0 \text{ or } i>2.
\end{cases}$$

The fact that $\dim \Ext_{\Ku_G(X)}^0(F_{\ell_s}, F_{\ell_s})=1$ follows from \Cref{lem:extcalc}, and the fact that $\dim \Ext_{\Ku_G(X)}^i(F_{\ell_s}, F_{\ell_t})=0$ for $i<0$ or $i>2$ follows because $\Ku_G(X)$ is a $2$-Calabi--Yau category.

Now let $s, t\in S$ with $s\neq t$. Since $F_{\ell_s}\in \Ku(X),$ we have that $\Hom(\calO_X, F_{\ell_s})=0$ for all $s\in S.$ We consider the long exact sequence obtained by applying $\Hom(-, F_{\ell_t})$ to the sequence \eqref{SES}: since $\Hom(\calO_X, F_{\ell_t})^{\oplus 4}$ surjects onto $\Hom(F_{\ell_s}, F_{\ell_t})$, we see that the latter is also zero. Applying Serre duality gives also $\Ext^2(F_{\ell_s}, F_{\ell_t})=0.$
Thus taking $G$-invariants gives the necessary vanishing of $\Ext^i_{\Ku_G(X)}(F_{\ell_s}, F_{\ell_t})$ for $i=0,2$.

It remains to show that $\Ext^1_{\Ku_G(X)}(F_{\ell_s}, F_{\ell_t})=0.$ For this, we compute the Euler characteristic $\chi_{\Ku_G(X)}(F_{\ell_s},F_{\ell_t})$, which is deformation invariant, hence equal to $\chi_{\Ku_G(X)}(F_{\ell_s},F_{\ell_s})$. By \Cref{lem:extcalc}, $\chi_{\Ku_G(X)}(F_{\ell_s},F_{\ell_s})=1-2+1=0$, which implies the desired vanishing of $\Ext^1_{\Ku_G(X)}(F_{\ell_s}, F_{\ell_t})$.
\end{proof}

\begin{proof}[Proof of \Cref{main theorem}]
The functor $\Phi_G$ is an equivalence of categories. This follows immediately by Propositions \ref{prop:fullyfaithful} and \ref{prop: indecomp}, noting that a fully faithful Fourier-Mukai functor between two $K3$ categories is automatically an equivalence (since the image is an admissible subcategory and the target is indecomposable, cf.~\Cref{lem: Bridgelandtrick}).
\end{proof}

\bibliographystyle{alpha}
\bibliography{bibliography}
\end{document}